\documentclass[author-year, review]{elsarticle}
\usepackage[margin=2.195cm]{geometry}

\usepackage{lineno,hyperref}
\modulolinenumbers[1]

\journal{European Journal of Operational Research}
 
\usepackage{setspace}
\usepackage{amssymb}
\usepackage{amsmath}
\usepackage{latexsym}
\usepackage{mathabx}
\usepackage{amsthm}
\usepackage{epigraph}
\usepackage{booktabs}
\usepackage{braket}
\usepackage{bbm}
\usepackage{appendix}
\usepackage{wrapfig}
\usepackage{graphicx}
\usepackage{array,multirow}
\usepackage{float}
\usepackage{pdflscape}
\usepackage{epstopdf}
\usepackage{pgfplots}
\usepackage{longtable}
\usepackage{soul}
\soulregister\cite7
\soulregister\ref7
\soulregister\eqref7
\soulregister\pageref7
\usepackage{cancel}
\usetikzlibrary{decorations.markings}
\DeclareMathOperator*{\argmax}{arg\,max}

\newcommand{\norm}[1]{\left\lVert#1\right\rVert}
\newcommand{\virg}[1]{\text{``#1''}}
\newcommand{\abs}[1]{\left\lvert#1\right\rvert}

\newcommand\RotText[1]{\fontsize{9}{9}\selectfont
  \rotatebox[origin=c]{90}{\parbox{3.5cm}{%
\centering#1}}}
\usepackage{pifont}
\newcommand{\cmark}{\ding{51}}%
%
\theoremstyle{definition}
\newtheorem{theorem}{Theorem}

\newtheorem{lemma}{Lemma}
\newtheorem{proposition}{Proposition}
\newtheorem{corollary}{Corollary}

\usepackage[font={footnotesize}]{caption}
\usepackage[font={scriptsize}]{subcaption}

\textwidth 5.9in \oddsidemargin 0.3in \evensidemargin 0.3in

\usepackage{algorithm,algorithmic}
\makeatletter
\renewcommand{\ALG@name}{Pseudocode}
\makeatother

\usepackage{fancyhdr}
\pagestyle{fancy}
\fancyhead{} 
\fancyhead[C]{\textbf{Published in \textit{European Journal of Operational Research} (2025), 322(1), 237-253. \url{https://doi.org/10.1016/j.ejor.2024.12.014}}}


\usepackage{numcompress}\bibliographystyle{model5-names.bst}\biboptions{authoryear}

\makeatletter
\def\ps@pprintTitle{%
 \let\@oddhead\@empty
 \let\@evenhead\@empty
 \def\@oddfoot{}%
 \let\@evenfoot\@oddfoot}
\makeatother

\makeatletter
\newcommand{\mathleft}{\@fleqntrue\@mathmargin0pt}
\newcommand{\mathcenter}{\@fleqnfalse}
\makeatother

\onehalfspacing
\begin{document}

\begin{frontmatter}

\title{A Novel Robust Optimization Model for \\Nonlinear Support Vector Machine}

\author[Maggioni_address]{Francesca Maggioni\corref{mycorrespondingauthor}}
\cortext[mycorrespondingauthor]{Corresponding author}
\ead{francesca.maggioni@unibg.it}

\author[Maggioni_address]{Andrea Spinelli}

\address[Maggioni_address]{Department of Management, Information and Production Engineering, University of Bergamo, Viale G. Marconi 5, Dalmine 24044, Italy}

\begin{abstract}
In this paper, we present new optimization models for Support Vector Machine (SVM), with the aim of separating data points in two or more classes. The classification task is handled by means of nonlinear classifiers induced by kernel functions and consists in two consecutive phases: first, a classical SVM model is solved, followed by a linear search procedure, aimed at minimizing the total number of misclassified data points. To address the problem of data perturbations and protect the model against uncertainty, we construct bounded-by-norm uncertainty sets around each training data and apply robust optimization techniques. We rigorously derive the robust counterpart extension of the deterministic SVM approach, providing computationally tractable reformulations. Closed-form expressions for the bounds of the uncertainty sets in the feature space have been formulated for typically used kernel functions. Finally, extensive numerical results on real-world datasets show the benefits of the proposed robust approach in comparison with various SVM alternatives in the machine learning literature.
\end{abstract}

\begin{keyword}
Machine Learning \sep Nonlinear Support Vector Machine \sep Robust Optimization
\end{keyword}

\end{frontmatter}

\nolinenumbers
\thispagestyle{fancy}

\section{Introduction}
\emph{Support Vector Machine} (SVM) is one of the main supervised \emph{Machine Learning} (ML) techniques commonly deployed for classification and regression purposes. Within the \emph{Operational Research} (OR) domain, supervised ML methods are designed to support better decision-making and solve hard optimization problems (\cite{GamGhaSaw2021}). To this end, a plethora of methodologies have been devised and applied to various OR fields (\cite{DeBocketal2023}). In particular, combinatorial optimization (\cite{BenLodPro2021,WeiHaoRenGlov2023}), customer churn prediction (\cite{ChenFanSun2012,MalLopVai2020,Ben-PenBlanCarRam-Cob2024,SzeSlo2024}), banking (\cite{YaoCroAnd2017,DouZopGouPlaZha2023,KatLelPyrAndFer2024}) and maritime industry (\cite{MiWangetal2019,RaeSahMan2023}).

Currently, deep learning algorithms are adopted whenever classical ML methods fail to capture complex relationships between input data both for classification and regression tasks (\cite{GamGhaSaw2021}). Nevertheless, the advantage of mathematical programming approaches to model deep neural networks has been explored only for small-sized datasets, and without a guarantee on the effectiveness of the performance (\cite{GunSepBaeOskLem2021}). For this reason, the investigation of novel ML techniques is a relevant ongoing research issue (\cite{MalLopCar2022}).

Introduced in \cite{VapChe1974}, SVM has outperformed most other ML systems, due to its simplicity and better performance. Therefore, it has been applied in many practical research fields, such as finance (\cite{TayCao2001,LuoYanTian2020}), chemistry (\cite{LiLiXu2009,MarDeLNic2020}), medicine (\cite{WanZheWoKo2018,MagFacGheManBonORAHS2022}), and vehicles smog rating classification (\cite{DeLMagSpi2024,MagSpi2024}), to name a few.

\emph{Hard Margin}-SVM (HM-SVM) is the original approach formulated in \cite{VapChe1974}, consisting in finding a hyperplane classifying observations into two classes, such that the margin, i.e. the $\ell_2$-distance from the hyperplane to the nearest point of each class, is maximized. The underlying hypothesis of the HM-SVM is that training data can always be linearly separated, such that no observation is misclassified. To overcome the assumption of linear separability, in \cite{CorVap1995} the \emph{Soft Margin}-SVM (SM-SVM) is proposed. In this case, the optimal hyperplane seeks a trade-off between the maximization of the margin and the minimization of the training error of misclassification.

In order to improve the accuracy of the method, several SVM variants have been devised in the literature. Specifically, in this paper we focus our attention on the one presented in \cite{LiuPot2009}. The advantages of this technique over other SVM approaches are mainly due to a two-step procedure. Indeed, rather than considering a single hyperplane, training data are firstly separated by means of two parallel hyperplanes as solutions of a SM-SVM model. The final optimal hyperplane is then searched in the strip between them, such that the total number of misclassified points is minimized. Compared to classical SM-SVM, numerical experiments show that this formulation achieves higher levels of computational accuracy.

Nevertheless, training observations may not be always separable by means of hyperplanes and, even with \emph{ad hoc} variants of linear SVM, the misclassification error may be significant. In \cite{BosGuyVap1992}, the extension of the linear HM-SVM model is introduced, by considering nonlinear transformation of the data. According to this technique, kernel functions are used to embed data points onto a higher-dimensional space (the so-called \emph{feature space}), without increasing the computational complexity of the problem. Several variants of this methodology have been proposed in the ML literature (see for example \cite{BenMan1992,Man1998,SchSmoWilBar2000,Yaj2005,JayKheCha2007,Hao2010,Peng2011,DingHua2014,DingZhaZhaZhaXue2019,BlaPueRod-Chi2020,CerGarRodAsd2020,DuZhaCheSunCheShao2021,GaoFangLuoMed2021,Jim-CorMorPin2021}).

For the methods mentioned above, all data points are implicitly assumed to be known exactly. However, in real-world observations this condition may not be always true. Indeed, measurement errors during data collection, random perturbations, presence of noise and other forms of uncertainty may corrupt the quality of input values, resulting in worsening performance of the classification process. In recent years, different techniques have been investigated with the aim of facing uncertainty in ML methods. Among them, \emph{Robust Optimization} (RO) is recognized as one of the main paradigms to protect optimization models against uncertainty (see for example \cite{Ben-TalElGNem2009,XuCarMan2009,BerBroCar2010}). RO assumes that all possible realizations of the uncertain parameter belong to a prescribed uncertainty set. The corresponding robust model is then derived by optimizing against the worst-case realization of the parameter across the entire uncertainty set (\cite{BerDunPawZhu2019}). The application of RO strategies typically results in higher predictiveness (\cite{MalLopVai2020,FacMagPot2022}). For this reason, it is worth designing novel RO models with the aim of improving the accuracy of the classification process.

In this paper, we present novel SVM models aiming at separating classes of data points. The formulation extends the approach of \cite{LiuPot2009} to the context of multiclass and nonlinear classification. In order to protect the model against perturbations, we introduce bounded-by-norm uncertainty sets around each training observation and rigorously derive the robust counterpart of the deterministic approach, providing computationally tractable reformulations. In addition, our proposal represents a valid contribution to the state of the art on SVM thanks to the computation of the uncertainty set bounds in the feature space as function of the bounds in the input space. This is a novel development in the ML domain.

The main contributions of the paper are four-fold and can be summarized as follows:
\begin{itemize}
\item To extend the binary linear SVM approach of \cite{LiuPot2009} to the cases of multiclass and nonlinear classification;
\item To formulate the robust extension of the SVM model with nonlinear classifiers using bounded-by-$\ell_p$-norm uncertainty sets and provide computationally tractable reformulations;
\item To rigorously derive bounds on the radii of the uncertainty sets in the feature space for some of the most used kernel functions in the ML literature;
\item To provide extensive numerical experiments based on real-world datasets with the aim of evaluating the performance of the proposed models and comparing the results with extant SVM methods in the literature.
\end{itemize}

The remainder of the paper is organized as follows. Section \ref{sec_literature_review} reviews the existing literature on the problem. In Section \ref{sec_background_notation}, the notation is introduced, along with a brief discussion on related SVM-type problems. In Section \ref{sec_novel_deterministic_model}, the novel deterministic model with nonlinear classifier is introduced for both binary and multiclass classification. Section \ref{sec_robust_model} considers the robust extension together with the construction of the uncertainty sets. In Section \ref{sec_computational_results}, the computational results are shown. Finally, Section \ref{sec_conclusions} concludes the paper and discusses future works.

\section{Literature review} \label{sec_literature_review}
The nonlinear SVM approach presented in \cite{BosGuyVap1992} has been explored in several works, leading to alternative formulations. In \cite{Man1998,LeeManWol2000} a kernel-induced decision boundary is derived by considering quadratic and piecewise-linear objective function, resulting in a convex model. In \cite{SchSmoWilBar2000} the formulation of \emph{$\nu$-Support Vector Classification} ($\nu$-SVC) is proposed for both linear and nonlinear classifiers. This algorithm differs from the classical SVM paradigm of \cite{Vap1995} since it involves a new parameter $\nu$ in the objective function, controlling the number of support vectors. In \cite{JayKheCha2007} the \emph{TWin Support Vector Machine} (TWSVM) is designed. Contrary to standard SVM, TWSVM determines a pair of nonparallel hyperplanes by solving two small-sized SVM-type problems. TWSVM is combined in \cite{Peng2011} with a flexible parametric margin model (\cite{Hao2010}), deriving the \emph{Twin Parametric Margin Support Vector Machine} (TPMSVM). Recently, in \cite{BlaPueRod-Chi2020} the classical $\ell_2$-norm problem has been extended to the general case of $\ell_p$-norm with $p>1$, resulting in a \textit{Second-Order Cone Programming} formulation (SOCP, \cite{MagPotBerAll2009}). Within the field of \textit{Double Well Potential} functions (DWP), a kernel-free DWP model for SVM is derived in \cite{GaoFangLuoMed2021} for classifying nonlinearly separable data. The problem of feature selection in nonlinear SVM is explored in \cite{Jim-CorMorPin2021}, where a method based on a min-max optimization model is proposed. With respect to the extant literature on nonlinear SVM, the first contribution of this work is the extension of the linear SVM variant developed in \cite{LiuPot2009} to the case of nonlinear classifiers. The model benefits from such extension since it handles cases of nonlinearly separable data with a low misclassification error.

In order to prevent low accuracies in the classification process when training data are plagued by uncertainty, RO techniques are applied in the SVM context (\cite{WanPar2014}). In \cite{Bhat2004} hyperellipsoids around data points are considered, and the robust model results in a SOCP problem. A tractable robust counterpart of the linear SM-SVM approach is derived in \cite{BerDunPawZhu2019}. The authors robustify the model by considering additive and bounded-by-norm perturbations in the training data. In \cite{ElGLan2003} the binary classification problem under feature uncertainty is formulated with uncertainty sets in the form of hyperrectangles and hyperellipsoids around input data. The same choices of uncertainty sets is made in \cite{FacMagPot2022}, where the RO extension of the linear SVM variant presented in \cite{LiuPot2009} is proposed. In this work, we further extend such approach by formulating a robust SVM model tailored for a general class of bounded-by-$\ell_p$-norm uncertainty sets. This improves the generalization capability of the model as the choice of the $\ell_p$-norm can be made according to the information available on the training dataset and the desired degree of conservatism.

As far as it concerns RO techniques applied to nonlinear SVM, various approaches exist in the literature. In \cite{BhaBhaBen-Tal2010,Ben-TalBhaNem2012} the kernel matrix is assumed to be affected by uncertainty, due to feature perturbations in the input data. Such matrix is decomposed as a linear combination of positive semidefinite matrices with bounded-by-$\ell_p$-norm coefficients. The main limitation of this approach is that the functional form of the matrices in the combination is typically unknown. Thus, it is not obvious how to characterize the elements in the uncertainty set, unless by using a sampling procedure. In \cite{BiZha2005,TraGil2006} training data points are subject to uncertain but bounded-by-$\ell_p$-norm perturbations. Robustified models are derived for both linear and nonlinear classifiers. A related work on bounded-by-norm uncertainty sets is \cite{XuCarMan2009}, where a link between regularization and robustness is provided. In \cite{TraAlw2010} the stability of SVM models with bounded perturbations is investigated by using discriminant functions. Polyhedral uncertainty sets are considered in \cite{FunManSha2002,JuTian2012,FanSadPar2014}, based on the nonlinear classifier proposed in \cite{Man1998}. In all these works on robust SVM with nonlinear classifiers, only the case with Gaussian kernel has been investigated. Indeed, for such kernel there exists a closed-form expression for the radius of the uncertainty set in the feature space based on the corresponding one in the input space (see \cite{XuCarMan2009}). In this paper, we prove further theoretical results valid for other classes of kernels, i.e. homogeneous and inhomogeneous polynomial kernels. These findings are beneficial for all robust SVM models with bounded-by-$\ell_p$-norm uncertainty sets and kernel-induced classifiers.

RO techniques are also applied to variants of the classical SVM model. In \cite{PengXu2013} a robust TWSVM classifier is proposed, by including uncertainty in the variance matrices of the two classes. In \cite{QiTianShi2013} the robust extension of TWSVM is derived. For the nonlinear case, only Gaussian kernel and ellipsoidal uncertainty sets are considered, resulting in a SOCP formulation. In \cite{DelMagSpi2023TPMSVM} the robust and multiclass extension of the TPMSVM is provided. A complete survey on recent developments on TWSVM models can be found in \cite{TanRajRasShaoGan2022}.

When partial or complete information on the probability distribution of the training data are available, other solution techniques dealing with uncertainties such as \emph{Chance-Constrained Programming} (CCP) and \emph{Distributionally Robust Optimization} (DRO) have been considered in the SVM literature (\cite{Ket2024,JiaPen2024}). The \emph{Minimax Probability Machine} (MPM) is the first distributionally robust SVM approach that minimizes the worst-case probability of misclassification (\cite{LanGhaBhaJor2002}). In \cite{MalLopVai2020} the MPMs are extended and applied to the robust profit-driven churn prediction. Within the MPM framework, the use of Cobb-Douglas function for maximizing the expected class accuracies under a worst-case distribution setting is proposed in \cite{MalLopCar2022}. As far as it concerns DRO methods applied to SVM, we mention the recent work of \cite{FacMagPot2022} where a moment-based distributionally robust extension of the \cite{LiuPot2009} formulation is designed. The problem of robust feature selection with CCP is explored in \cite{LopMalCar2018} by using difference of convex functions. Within the multiclass context, in \cite{LopMalCar2017} a robust CCP formulation for multiclass classification via TWSVM is proposed. Finally, a combination of CCP and DRO techniques applied to linear and nonlinear SVM models with uncertain data is explored in \cite{WanFanPar2018,Kha-ShiBab-AzaHos-NodPar2023} and in \cite{LinFangFangGao2024,LinFangFangGaoLuo2024}, respectively.

All the approaches discussed so far are listed in Table \ref{tab_SVM_literature_review}. For a comprehensive review of RO techniques applied to SVM models the reader is referred to \cite{SinGhoShu2020}.

In summary, the contributions of this paper differ from the literature described above in several aspects. First of all, we present a novel optimization model with nonlinear classifiers, extending the approach of \cite{LiuPot2009}, both in the case of binary and multiclass classification. Secondly, we consider general bounded-by-$\ell_p$-norm uncertainty sets around training observations. This increases the flexibility of the model, adapting the formulation to more complex perturbations in input data. In addition, it results in a generalization of the robust approach of \cite{FacMagPot2022} where only box and ellipsoidal uncertainty sets have been considered. Thirdly, we derive closed-form expressions of the bounds in the feature space for some of typically used kernel functions in ML literature. Finally, we deduce the robust counterpart of the deterministic formulations, protecting the models against data uncertainty.

\begin{table}[h!]
\centering
\resizebox{\textwidth}{!}{
    \begin{tabular}{@{} c l *{37}c}
        & & \RotText{\small\cite{VapChe1974}} & \RotText{\small\cite{BosGuyVap1992}} & \RotText{\small\cite{Vap1995}} & \RotText{\small\cite{Man1998}} & \RotText{\small\cite{LeeManWol2000}} & \RotText{\small\cite{SchSmoWilBar2000}} & \RotText{\small\cite{FunManSha2002}} & \RotText{\small\cite{LanGhaBhaJor2002}} & \RotText{\small\cite{ElGLan2003}} & \RotText{\small\cite{Bhat2004}} & \RotText{\small\cite{BiZha2005}} & \RotText{\footnotesize\cite{TraGil2006}} & \RotText{\small\cite{JayKheCha2007}} & \RotText{\small\cite{LiuPot2009}} & \RotText{\small\cite{XuCarMan2009}} & \RotText{\small\cite{BhaBhaBen-Tal2010}} & \RotText{\footnotesize\cite{TraAlw2010}} & \RotText{\small\cite{Peng2011}} & \RotText{\small\cite{Ben-TalBhaNem2012}} & \RotText{\small\cite{JuTian2012}} & \RotText{\small\cite{PengXu2013}} & \RotText{\small\cite{QiTianShi2013}} & \RotText{\small\cite{FanSadPar2014}} & \RotText{\small\cite{LopMalCar2017}} & \RotText{\small\cite{LopMalCar2018}} & \RotText{\small\cite{WanFanPar2018}} & \RotText{\small\cite{BerDunPawZhu2019}} & \RotText{\small\cite{BlaPueRod-Chi2020}} & \RotText{\small\cite{MalLopVai2020}} & \RotText{\small{\cite{GaoFangLuoMed2021}}} & \RotText{\small\cite{Jim-CorMorPin2021}} & \RotText{\small\cite{FacMagPot2022}} & \RotText{\small\cite{MalLopCar2022}} & \RotText{\small\cite{DelMagSpi2023TPMSVM}} & \RotText{\small\cite{Kha-ShiBab-AzaHos-NodPar2023}} & \RotText{\small{\cite{LinFangFangGao2024}}} & \RotText{\small{\cite{LinFangFangGaoLuo2024}}}
        \\
       \toprule
\multirow{2}{*}{{\small{SVM}}} & {\small{Linear classifier}} & \cmark & & \cmark & & & \cmark & \cmark & \cmark & \cmark & \cmark & \cmark & \cmark &  \cmark & \cmark & \cmark & & \cmark & \cmark & & \cmark & \cmark & \cmark & \cmark & \cmark & \cmark & \cmark & \cmark & \cmark & \cmark & & & \cmark & \cmark & \cmark & \cmark
\\
\cmidrule{2-39}
& {\small{Nonlinear classifier}} & & \cmark & \cmark & \cmark & \cmark & \cmark & \cmark & \cmark & & & \cmark & \cmark & \cmark & & \cmark & \cmark & \cmark & \cmark & \cmark & \cmark & \cmark & \cmark & \cmark & \cmark & \cmark & & & \cmark & \cmark & {\cmark} & \cmark & & \cmark & \cmark & & {\cmark} & {\cmark}
\\
\cmidrule[0.55pt]{1-39}
 & {\small{Box RO}} &&&&&&&& & \cmark &&&&&&&&&&&&&&&&&&&&&&& \cmark
\\
\cmidrule{2-39}
{\small{Type of}} & {\small{Ellipsoidal RO}} &&&&&&&&\cmark & \cmark & \cmark &&&&&&&&&&&&& \cmark &&&&&&&&& \cmark
\\
\cmidrule{2-39}
{\small{Robust}} & {\small{Polyhedral RO}}  &&&&&&& \cmark &&&&&&&&&&&&& \cmark & & & \cmark
\\
\cmidrule{2-39}
{\small{Methodology}} & {\small{Bounded-by-norm RO}} &&&&&&&& &  & & \cmark & \cmark & & & \cmark & & \cmark & & \cmark &&&&&&&& \cmark &&&&&&& \cmark
\\
\cmidrule{2-39}
& {\small{Matrix RO}} &&&&&&&& &&&&&&&& \cmark & & & \cmark & & \cmark
\\
\cmidrule{2-39}
& {\small{Chance-Constrained}} &&&&&&&& \cmark &&&&&&&&\cmark &&&&&&&& \cmark & \cmark & \cmark & & & \cmark & & & \cmark & & & \cmark & {\cmark} & {\cmark}
\\
\cmidrule{2-39}
& {\small{Distributionally RO}} &&&&&&&& \cmark &&&&&&&&&&&&&&&&&&\cmark & &&&& \cmark & & & & \cmark & {\cmark} & {\cmark}
\\
        \bottomrule
    \end{tabular}}
    \caption{A selected SVM literature review. In the first row of the table the methodological contributions are listed in chronological order. Second and third rows specify the type of SVM classifier (linear or nonlinear). Finally, the optimization under uncertainty methodologies employed in the articles are explored in rows four to ten.} \label{tab_SVM_literature_review}
\end{table}

\normalsize

\section{Background and notation} \label{sec_background_notation}
In this section, we report the notation (Section \ref{sec_notation}) and briefly recall the methods that are relevant for our proposal (Section \ref{sec_review_methods}).

\subsection{Notation} \label{sec_notation}
In the following, the set of nonnegative real numbers will be denoted by $\mathbb{R}^{+}$, whereas if zero is excluded we write $\mathbb{R}^{+}_0$. Hereinafter, all vectors will be column vectors, unless transposition by the superscript ``$^\top$''. If $a$ is a vector in $\mathbb{R}^n$, then its $i$-th component will be denoted by $a_i$. The scalar product in a inner product space $\mathcal{H}$ will be denoted by $\langle \cdot,\cdot\rangle$. If $\mathcal{H}=\mathbb{R}^n$ and $a,b\in\mathbb{R}^n$, the dot product will be indifferently denoted as $a^\top b$ or $\langle a, b \rangle$. For $p\in[1,\infty]$, $\norm{a}_p$ is the $\ell_p$-norm of $a$. Finally, if $c\in \mathbb{R}$, the indicator function $\mathbbm{1}(c)$ has value 1 if $c$ is positive and 0 otherwise. By convention, we assume that $\frac{1}{\infty}:=0$ and $\norm{a}_{\infty}^{\infty}:=\norm{a}_{\infty}$.

\subsection{A selected review of SVM models} \label{sec_review_methods}
Let $\{(x^{(i)},y^{(i)})\}_{i=1}^m$ be the set of training data points, where $x^{(i)}\in \mathbb{R}^n$ is the vector of features, and $y^{(i)}\in\{-1,+1\}$ is the label representing the class to which the $i$-th data point belongs. In particular, we denote by $\mathcal{A}$ and $\mathcal{B}$ the \emph{positive} (label ``$+1$'') and \emph{negative} (label ``$-1$'') classes, respectively.

The \emph{Soft Margin}-SVM approach (SM-SVM, \cite{CorVap1995}) finds the best separating hyperplane $H:=(w,\gamma)$ defined by the equation $w^{\top}x=\gamma$, where $w\in \mathbb{R}^n$ and $\gamma\in\mathbb{R}$, as solution of the following $\ell_q$-model, $q\in [1,\infty]$:
\begin{linenomath}
\begin{equation} \label{SMSVM}
\begin{aligned}
\min_{w,\gamma,\xi}  \quad & \norm{w}_q^q+\nu \sum_{i=1}^m\xi_i\\
\text{s.t.} \quad & y^{(i)}(w^{\top}x^{(i)}-\gamma)\geq 1-\xi_i & \quad  i=1,\ldots,m\\
 & \xi_i\geq 0 & \quad  i=1,\ldots,m.
\end{aligned}
\end{equation}
\end{linenomath}

The vector $\xi\in\mathbb{R}^m$ is the soft margin error vector and $\nu \geq 0$ is a regularization parameter. Data point $x^{(i)}$ is correctly classified by the separating hyperplane $H$ if $0\leq \xi_i \leq 1$, otherwise is misclassified.

Whenever a new observation $x\in\mathbb{R}^n$ occurs, it is classified as \emph{positive} or \emph{negative} depending on the decision function $\mathbbm{1}\big(w^{\top}x-\gamma\big)$.

Instead of a single hyperplane, in \cite{LiuPot2009} a pair of parallel hyperplanes $H_{\mathcal{A}}$ and $H_{\mathcal{B}}$ is constructed, satisfying the following properties:
\begin{enumerate}
\item[\textbf{(P1)}] all points of class $\mathcal{A}$ lie on one halfspace of $H_{\mathcal{A}}$;
\item[\textbf{(P2)}] all points of class $\mathcal{B}$ lie on the opposite halfspace of $H_{\mathcal{B}}$;
\item[\textbf{(P3)}] the intersection of the convex hulls of $\mathcal{A}$ and $\mathcal{B}$ is contained in the region between $H_{\mathcal{A}}$ and $H_{\mathcal{B}}$.
\end{enumerate}
The starting point of the formulation consists in solving the SM-SVM model \eqref{SMSVM} with $q=1$, determining an initial separating hyperplane $H_0:=(w,\gamma)$ and the soft margin vector $\xi$. Then, $H_0$ is shifted in order to identify $H_{\mathcal{A}}:=(w,\gamma-1+\omega_{A})$ and $H_{\mathcal{B}}:=(w,\gamma+1-\omega_{B})$, where:
\begin{linenomath}
\begin{equation} \label{omega_linear}
\omega_{\mathcal{A}}:=\max_{i:x^{(i)}\in\mathcal{A}} \big\{\xi_i\big\}, \quad \omega_{\mathcal{B}}:=\max_{i:x^{(i)}\in\mathcal{B}} \big\{\xi_i\big\}.
\end{equation}
\end{linenomath}

The choice of $\omega_{\mathcal{A}}$ and $\omega_{\mathcal{B}}$ according to condition \eqref{omega_linear} guarantees that $H_{\mathcal{A}}$ and $H_{\mathcal{B}}$ satisfy properties \textbf{(P1)-(P3)}.

Finally, the optimal separating hyperplane $H:=(w,b)$ is such that is parallel to $H_{\mathcal{A}}$ and $H_{\mathcal{B}}$, lies in their strip, and the number of misclassified points is minimized. These conditions are satisfied finding the optimal parameter $b$ as solution of the following problem:
\begin{linenomath}
\begin{equation}\label{linear_search_SVM}
\begin{aligned}
\min_{b}  \quad & \sum_{i:x^{(i)}\in\mathcal{A}} \mathbbm{1}\big(w^\top x^{(i)}-b\big)+\sum_{i:x^{(i)}\in\mathcal{B}} \mathbbm{1}\big(b-w^\top x^{(i)}\big)\\
\text{s.t.} \quad & \gamma+1-\omega_{\mathcal{B}}\leq b \leq \gamma-1+\omega_{\mathcal{A}}.
\end{aligned}
\end{equation}
\end{linenomath}

From a computational perspective, model \eqref{linear_search_SVM} is solved through a linear search procedure. Specifically, the interval $[\gamma+1-\omega_{\mathcal{B}},\gamma-1+\omega_{\mathcal{A}}]$ is divided into $N_{\max}$ sub-intervals of equal length and the problem is solved on each of them. The optimal solution $b$ is the one providing the overall minimum value of the objective function.

Similarly to SM-SVM, a new data point $x\in\mathbb{R}^n$ is classified in class $\mathcal{A}$ or $\mathcal{B}$ depending on the decision rule $\mathbbm{1}\big(w^{\top}x-b\big)$.

Whenever training observations are not linearly separable, the so-called \emph{kernel trick} can be applied (\cite{CorVap1995}). The key idea is to introduce a function $\phi(\cdot)$, usually referred to as \emph{feature map}, to translate data from the \emph{input space} $\mathbb{R}^n$ to a higher-dimensional space $\mathcal{H}$, equipped with the dot product $\langle\cdot,\cdot\rangle$. In $\mathcal{H}$, the transformed data $\{\phi(x^{(i)})\}_{i=1}^m$ are assumed to be linearly separable. Thus, model \eqref{SMSVM} can be written in the feature space $\mathcal{H}$ as:
\begin{linenomath}
\begin{equation} \label{phiSVM}
\begin{aligned}
\min_{\overline{w},\gamma,\xi}  \quad & \norm{\overline{w}}_{\mathcal{H}}+\nu \sum_{i=1}^m\xi_i\\
\text{s.t.} \quad & y^{(i)}(\langle \overline{w},\phi(x^{(i)})\rangle-\gamma)\geq 1-\xi_i & \quad  i=1,\ldots,m\\
 & \xi_i\geq 0 & \quad  i=1,\ldots,m.
\end{aligned}
\end{equation}
\end{linenomath}

Vector $\overline{w}$ $\in\mathcal{H}$ defines the linear classifier in the feature space and the norm $\norm{\cdot}_{\mathcal{H}}$ is induced by the inner product $\langle\cdot,\cdot\rangle$.

Unfortunately, the expression of the mapping $\phi(\cdot)$ is usually unknown and, consequently, model \eqref{phiSVM} cannot be solved in practice. To overcome this limitation, a symmetric and positive semidefinite kernel $k:\mathbb{R}^n\times \mathbb{R}^n \to \mathbb{R}$ is introduced. Examples of kernel functions typically used in ML literature are reported in Table \ref{tab_kernels}. For a comprehensive overview, the reader is referred to \cite{SchSmo2001}.

\begin{table}[h!]
\centering
\resizebox{0.8\textwidth}{!}{
\begin{tabular}{lll}\toprule
Kernel function & $k(x,x')$ & Parameter \\ \hline
Homogeneous polynomial & $k(x,x')=\langle x,x'\rangle^d$ & $d\in \mathbb{N}$\\
Inhomogeneous polynomial & $k(x,x')=(c+\langle x,x'\rangle)^d$ & $c \in \mathbb{R}^{+}$, $d\in \mathbb{N}$\\
Gaussian Radial Basis Function (RBF) & $k(x,x')= \exp\bigg(\displaystyle -\frac{\norm{x-x'}_2^2}{2 \alpha^2}\bigg)$ & $\alpha \in \mathbb{R}^{+}_{0}$\\
\bottomrule
\end{tabular}}
\caption{Examples of kernel functions. The first column reports the name of the functions. The second column provides their mathematical expressions. Finally, the third column contains the related relevant parameters.} \label{tab_kernels}
\end{table}

As in \cite{CorVap1995}, $\overline{w}$ can be decomposed into a finite linear combination of $\{\phi(x^{(j)})\}_{j=1}^m$, as $\overline{w}=\sum_{j=1}^m y^{(j)}u_j\phi(x^{(j)})$, for some coefficients $u_j\in\mathbb{R}$. Consequently, for all $i=1,\ldots,m$ the dot product $\langle \overline{w},\phi(x^{(i)})\rangle$ in the first set of constraints of model \eqref{phiSVM} can be formulated as $\langle \overline{w},\phi(x^{(i)})\rangle =\sum_{j=1}^m K_{ij}y^{(j)}u_j$, where $K_{ij}:=k(x^{(i)},x^{(j)})=\langle\phi(x^{(i)}),\phi(x^{(j)})\rangle$. The properties of the kernel function imply that the Gram matrix $K=[K_{ij}]$ is a real, symmetric and positive semidefinite $m \times m$ matrix (\cite{PicSci2018}).

As in \cite{Man1998,LeeManWol2000}, in the objective function of model \eqref{phiSVM} the $\mathcal{H}$-norm $\norm{\overline{w}}_{\mathcal{H}}$ is replaced by $\norm{u}_{q}^q$, where $u:=[u_1,\ldots,u_m]^\top$. This choice guarantees the convexity of the optimization problem. Therefore, model \eqref{phiSVM} can be rewritten as:
\begin{linenomath}
\begin{equation} \label{GSVM}
\begin{aligned}
\min_{u,\gamma,\xi}  \quad & \norm{u}_{q}^q+\nu \sum_{i=1}^m\xi_i\\
\text{s.t.} \quad & y^{(i)}\bigg(\sum_{j=1}^m K_{ij}y^{(j)}u_j-\gamma\bigg)\geq 1-\xi_i & \quad  i=1,\ldots,m\\
 & \xi_i\geq 0 & \quad  i=1,\ldots,m.
\end{aligned}
\end{equation}
\end{linenomath}

Within this context, the separating hyperplane in the feature space translates into a nonlinear decision boundary $S:=(u,\gamma)$ in the input space, defined by the following equation:
\begin{linenomath}
\begin{equation}\label{nonlinear_hyperplane}
\sum_{i=1}^m k(x,x^{(i)})y^{(i)}u_i=\gamma.
\end{equation}
\end{linenomath}

Finally, each new observation $x\in \mathbb{R}^n$ is classified either in class $\mathcal{A}$ or $\mathcal{B}$ according to the decision function $\mathbbm{1}\big(\sum_{i=1}^m k(x,x^{(i)})y^{(i)}u_i-\gamma\big)$.

\section{A novel approach for deterministic nonlinear SVM} \label{sec_novel_deterministic_model}
In this section, we propose an extension of the SVM approach presented in \cite{LiuPot2009} to the nonlinear case. Specifically, we classify input observations by means of kernel-induced decision boundaries, such that the corresponding hyperplanes in the feature space satisfy properties \textbf{(P1)}-\textbf{(P3)}.

In Section \ref{sec_novel_binary_det} we tackle a binary classification task, whereas in Section \ref{sec_novel_multiclass_det} we extend the approach to the case of multiclass classification.

\subsection{Binary classification} \label{sec_novel_binary_det}

First of all, we start solving model \eqref{GSVM} and finding an initial decision boundary $S_0:=(u,\gamma)$. In the input space, hypersurface $S_0$ induces an initial nonlinear separation of training data points. Accordingly, in the feature space the corresponding hyperplane $H_0$ performs a linear classification of transformed observations.

Then, for each of the two classes, we compute the greatest misclassification error through the extended version of formulas \eqref{omega_linear}:
\begin{linenomath}
\begin{equation} \label{omega_nonlinear}
\omega_{\mathcal{A}}:= \max_{i=1,\ldots,m} {(D\xi)}_i \qquad \omega_{\mathcal{B}}:= \max_{i=1,\ldots,m} {(-D\xi)}_i,
\end{equation}
\end{linenomath}
where $D$ is a diagonal matrix with entries $D_{ii}:=y^{(i)}$, for all $i=1,\ldots,m$.

Due to the structure of problem \eqref{GSVM}, the modulus of $-1+\omega_{\mathcal{A}}$ represents the deviation of the farthest misclassified point of class $\mathcal{A}$ from $H_0$ and similarly for $1-\omega_{\mathcal{B}}$. Nevertheless, it may happen that $H_0$ already correctly classifies all the data points of one or both classes. In that case, the moduli are just the deviations of the closest data points from hyperplane $H_0$. According to the classic literature of SVM (see \cite{CorVap1995}), we call \emph{support vectors} of class $\mathcal{A}$ and $\mathcal{B}$ the transformed points that deviate $\abs{-1+\omega_{\mathcal{A}}}$ and $\abs{1-\omega_{\mathcal{B}}}$ from $H_0$, respectively.

At this stage, similarly to \cite{LiuPot2009}, we shift hyperplane $H_0$ by $-1+\omega_{\mathcal{A}}$ and $1-\omega_{\mathcal{B}}$, obtaining $H_{\mathcal{A}}$ and $H_{\mathcal{B}}$, respectively. Such a pair of parallel hyperplanes passes through the support vectors of the corresponding class and satisfies properties \textbf{(P1)}-\textbf{(P3)} in the feature space. According to equation \eqref{nonlinear_hyperplane}, the corresponding hypersurfaces $S_{\mathcal{A}}:=(u,\gamma-1+\omega_{\mathcal{A}})$ and $S_{\mathcal{B}}:=(u,\gamma+1-\omega_{\mathcal{B}})$ are then derived in the input space.

Finally, the optimal kernel-induced decision boundary $S:=(u,b)$ is deduced, where $b$ is the solution of the nonlinear version of model \eqref{linear_search_SVM}:
\begin{linenomath}
\begin{equation}\label{linear_search_GSVM}
\begin{aligned}
\min_{b}  \quad & \sum_{i=1}^m \mathbbm{1}\bigg(y^{(i)}b-y^{(i)}\sum_{j=1}^m K_{ij}y^{(j)}u_j\bigg)\\
\text{s.t.} \quad & \gamma+1-\omega_{\mathcal{B}}\leq b \leq \gamma-1+\omega_{\mathcal{A}}.
\end{aligned}
\end{equation}
\end{linenomath}

We observe that hypersurface $S$ in the input space is induced by hyperplane $H$ in the feature space, which is parallel to $H_{\mathcal{A}}$ and $H_{\mathcal{B}}$ and lies in the region between them. Therefore, a new observation $x\in\mathbb{R}^n$ is classified according to the decision function $\mathbbm{1}\big(\sum_{i=1}^m k(x,x^{(i)})y^{(i)}u_i-b\big)$.

For the sake of clarity, all the steps of the approach discussed so far are schematically reported in Pseudocode \ref{pseudocode_novel_approach}.

\begin{algorithm}[H]
\begin{algorithmic}[1]
\REQUIRE $\{(x^{(i)},y^{(i)})\}_{i=1}^m$, $q\in[1,\infty]$, $\nu\geq 0$, $k(\cdot,\cdot):\mathbb{R}^n\times \mathbb{R}^n \to \mathbb{R}$.
\STATE Calculate matrix $K$ as $K_{ij}=k\big(x^{(i)},x^{(j)}\big), i,j=1,\ldots,m$.
\STATE Solve model \eqref{GSVM}.
\STATE Find the initial separating hypersurface $S_0=(u,\gamma)$, defined by equation \eqref{nonlinear_hyperplane}.
\STATE Construct diagonal matrix $D$ as $D_{ii}=y^{(i)}, i=1,\ldots,m$, and compute $\omega_{\mathcal{A}}$ and $\omega_{\mathcal{B}}$ according to formulas \eqref{omega_nonlinear}.
\STATE Shift $S_0$ to get the separating hypersurface for each class, $S_{\mathcal{A}}=(u,\gamma-1+\omega_{\mathcal{A}})$ and $S_{\mathcal{B}}=(u,\gamma+1-\omega_{\mathcal{B}})$, defined by \eqref{nonlinear_hyperplane}.
\STATE Solve model \eqref{linear_search_GSVM}, obtaining parameter $b$.
\ENSURE The optimal decision boundary $S=(u,b)$, defined by \eqref{nonlinear_hyperplane}.
\end{algorithmic}
\caption{A novel approach for nonlinear SVM.} \label{pseudocode_novel_approach}
\end{algorithm}

The computational complexity of nonlinear SVM models is between $O(m^2)$ and $O(m^3)$ (\cite{Peng2011}). Since model \eqref{linear_search_GSVM} requires at most $N_{\max}$ iterations to be solved through a linear search procedure, the computational complexity of our approach is between $O(\max\{m^2,N_{\max}\})$ and $O(\max\{m^3,N_{\max}\})$.

By way of illustration, in Figure \ref{fig_toy_2d_det} we depict the separating surfaces obtained by applying the proposed SVM methodology to a bidimensional toy example. In model \eqref{GSVM} we set $q=1$, $\nu=1$ and consider linear and Gaussian RBF kernels. The graphical interpretation of the novel approach is illustrated in Figure \ref{fig_toy_3d_RBF}.

\begin{figure}[h!]
     \centering
  \begin{subfigure}[b]{0.49\textwidth}
         \centering
         \includegraphics[width=\textwidth]{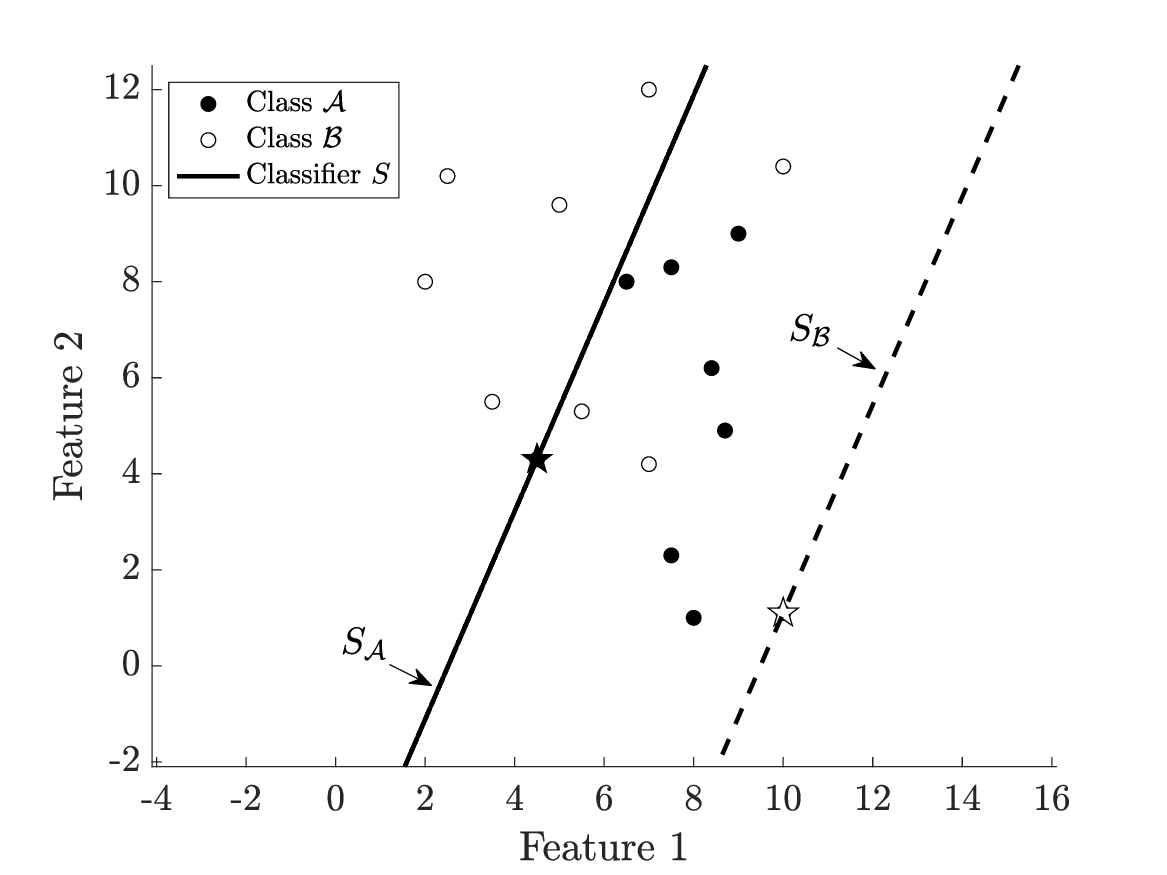}
         \caption{Linear kernel}
         \label{fig_toy_2d_det_lin}
     \end{subfigure}
     \begin{subfigure}[b]{0.49\textwidth}
         \centering
\includegraphics[width=\textwidth]{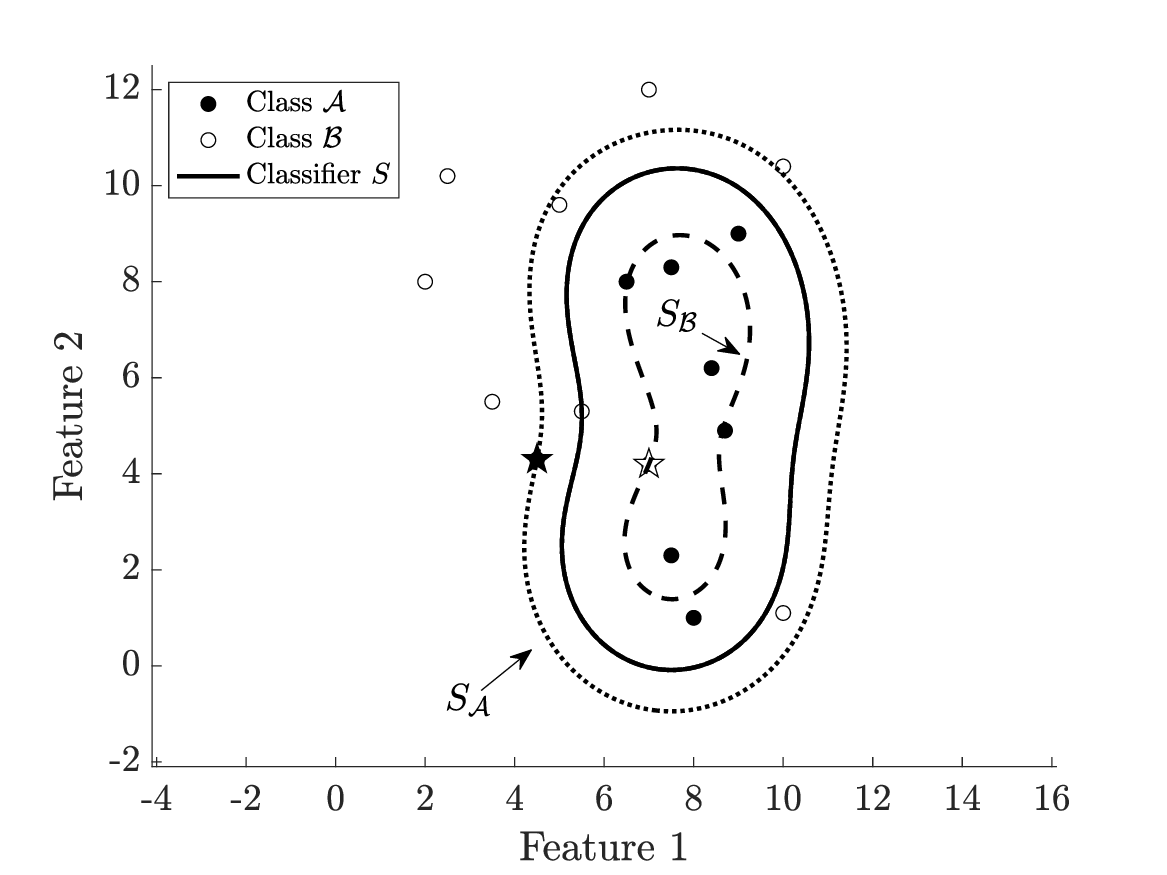}
         \caption{Gaussian RBF kernel ($\alpha=1.9$)}
         \label{fig_toy_2d_det_RBF}
     \end{subfigure}
          \caption{Separating surfaces obtained with linear and Gaussian RBF kernel functions. Support vectors are depicted as stars.}
        \label{fig_toy_2d_det}
\end{figure}

\begin{figure}[h!] 
\begin{center}
\includegraphics[width=5in]{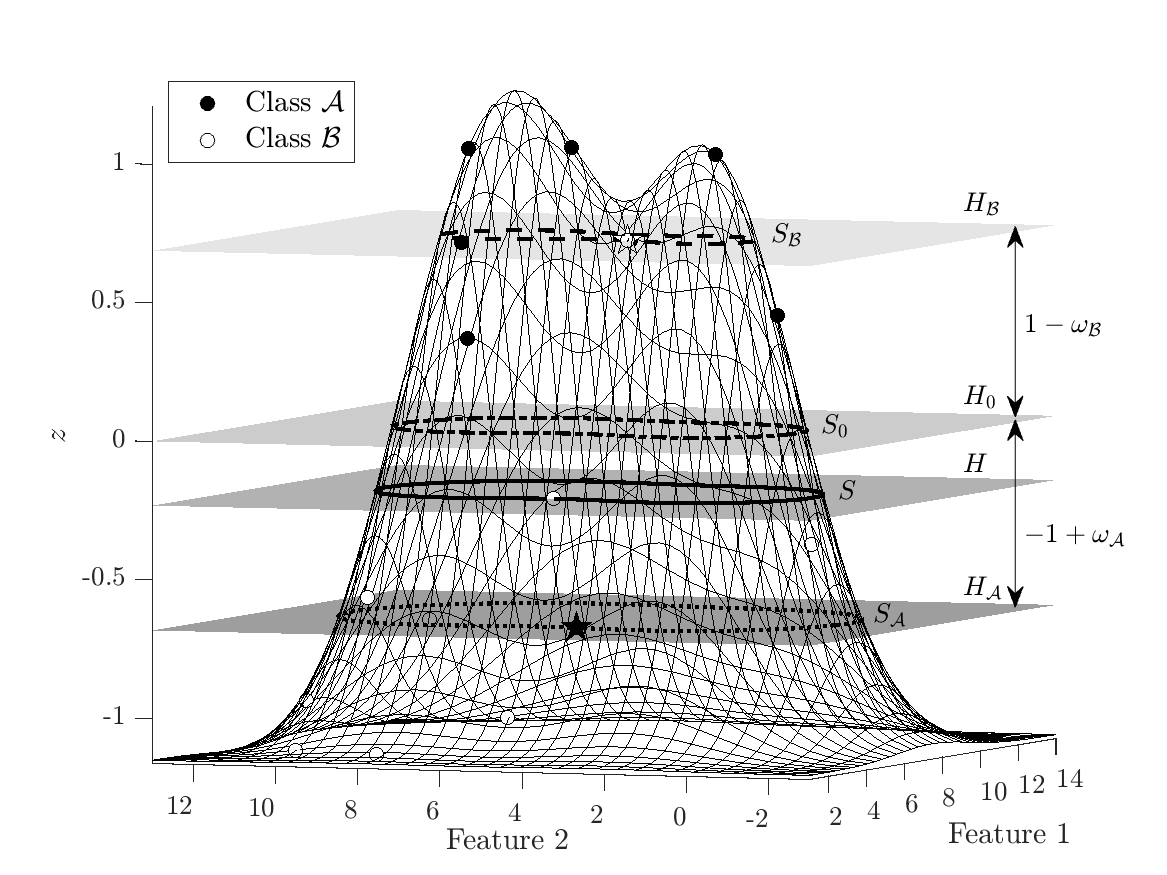}
\end{center}
\caption{Graphical representation of the implicit function \eqref{nonlinear_hyperplane}, in the case of Gaussian RBF kernel ($\alpha=1.9$), along with the separating hyperplanes and decision boundaries. Support vectors are drawn as stars.}
\label{fig_toy_3d_RBF}
\end{figure}

\subsection{Multiclass classification} \label{sec_novel_multiclass_det}
In this section, we derive the multiclass extension of the approach presented so far. We focus our attention on one of the most commonly used multiclass SVM framework, the \emph{one-versus-all} (\cite{Vap1995,WesWat1998}). According to this methodology, $L$ binary classifiers are constructed, where $L$ is the number of classifying categories, such that each class is independently separated by all the others grouped together. Formally, let $\{(x^{(i)},y^{(i)})\}_{i=1}^m$ be the set of training observations, with $x^{(i)}\in \mathbb{R}^n$ and $y^{(i)}\in\{1,\ldots,L\}$. For each class $l=1,\ldots,L$, we find an initial separating hypersurface $S_{l,0}:=(u_l,\gamma_l)$, where $u_l\in\mathbb{R}^n$ and $\gamma_l\in\mathbb{R}$ are the solutions of the following multiclass version of model \eqref{GSVM}:
\begin{linenomath}
\begin{equation} \label{GSVM_multiclass}
\begin{aligned}
\min_{u_l,\gamma_l,\xi_l}  \quad & \norm{u_l}_{q}^q+\nu \sum_{i=1}^m\xi_{l,i}\\
\text{s.t.} \quad & \widehat{y}^{(i)}_l\bigg(\sum_{j=1}^m K_{ij}\widehat{y}^{(j)}_lu_{l,j}-\gamma_l\bigg)\geq 1-\xi_{l,i} & \quad  i=1,\ldots,m\\
 & \xi_{l,i}\geq 0 & \quad  i=1,\ldots,m,
\end{aligned}
\end{equation}
\end{linenomath}
with $\widehat{y}^{(i)}_l=1$ if $y^{(i)}=l$, and $\widehat{y}^{(i)}_l=-1$ otherwise. Then, we construct the diagonal matrix $\widehat{D}_l$, with $\widehat{D}_{l,ii}:=\widehat{y}^{(i)}_l$, $i=1,\ldots,m$, and compute:
\begin{linenomath}
\begin{equation*}
\omega_{l}:= \max_{i=1,\ldots,m} {(\widehat{D}_l\xi_l)}_i \qquad \omega_{-l}:= \max_{i=1,\ldots,m} {(-\widehat{D}_l\xi_l)}_i.
\end{equation*}
\end{linenomath}

Hypersurface $S_{l,0}$ is then shifted to get $S_{l}:=(u_l,\gamma_l-1+\omega_l)$ and $S_{-l}:=(u_l,\gamma_l+1-\omega_{-l})$ in the input space. The corresponding hyperplanes in the feature space satisfy properties \textbf{(P1)}-\textbf{(P3)}. Finally, the optimal decision boundary for class $l$ versus all the others is $S_{l,-l}:=(u_l,b_l)$, with $b_l$ solution of the following model:
\begin{linenomath}
\begin{equation}\label{linear_search_GSVM_multiclass}
\begin{aligned}
\min_{b_l}  \quad & \sum_{i=1}^m \mathbbm{1}\bigg(\widehat{y}^{(i)}_lb_l-\widehat{y}^{(i)}_l\sum_{j=1}^m K_{ij}\widehat{y}^{(j)}_lu_{l,j}\bigg)\\
\text{s.t.} \quad & \gamma_l+1-\omega_{-l}\leq b_l \leq \gamma_l-1+\omega_{l}.
\end{aligned}
\end{equation}
\end{linenomath}

The decision function of the $l$-th class is given by $f_l(x):=\sum_{i=1}^m k(x,x^{(i)})\widehat{y}^{(i)}_lu_{l,i}-b_l$, and each new observation $x\in\mathbb{R}^n$ is assigned to the class $l^*:=\argmax_{l= 1,\ldots,L} f_l(x)$ (\cite{LopMalCar2017}).

Since the \emph{one-versus-all} strategy generates $L$ binary classifiers, one for each class, the computational complexity of our multiclass approach is between $O(L\cdot\max\{m^2,N_{\max}\})$ and $O(L\cdot\max\{m^3,N_{\max}\})$.

We represent in Figure \ref{fig_toy_2d_det_multiclass} the results of the proposed methodology in the case of a multiclass classification task. The parameters $q$ and $\nu$ are the same as in Figure \ref{fig_toy_2d_det}. Similarly to the binary case (see Figure \ref{fig_toy_2d_det_lin}), it may happen that either $S_{l}$ or $S_{-l}$ coincides with $S_{l,-l}$. This is due to the fact that in model \eqref{linear_search_GSVM_multiclass} the optimal parameter $b_l$ may be equal to $\gamma_l-1+\omega_l$ or $\gamma_l+1-\omega_{-l}$, respectively.

\begin{figure}[h!]
     \centering
  \begin{subfigure}[b]{0.49\textwidth}
         \centering
         \includegraphics[width=\textwidth]{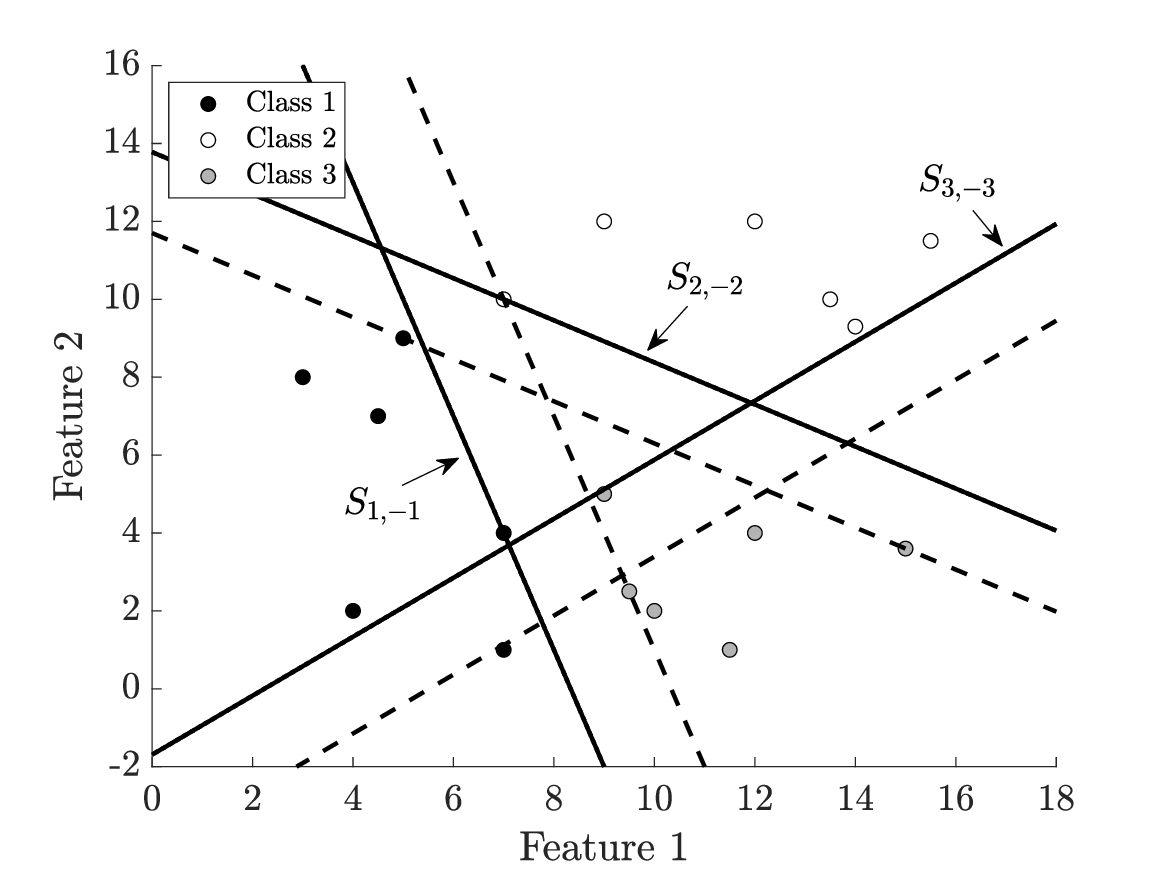}
         \caption{Linear kernel}
         \label{fig_toy_2d_det_lin_multiclass}
     \end{subfigure}
     \begin{subfigure}[b]{0.49\textwidth}
         \centering
\includegraphics[width=\textwidth]{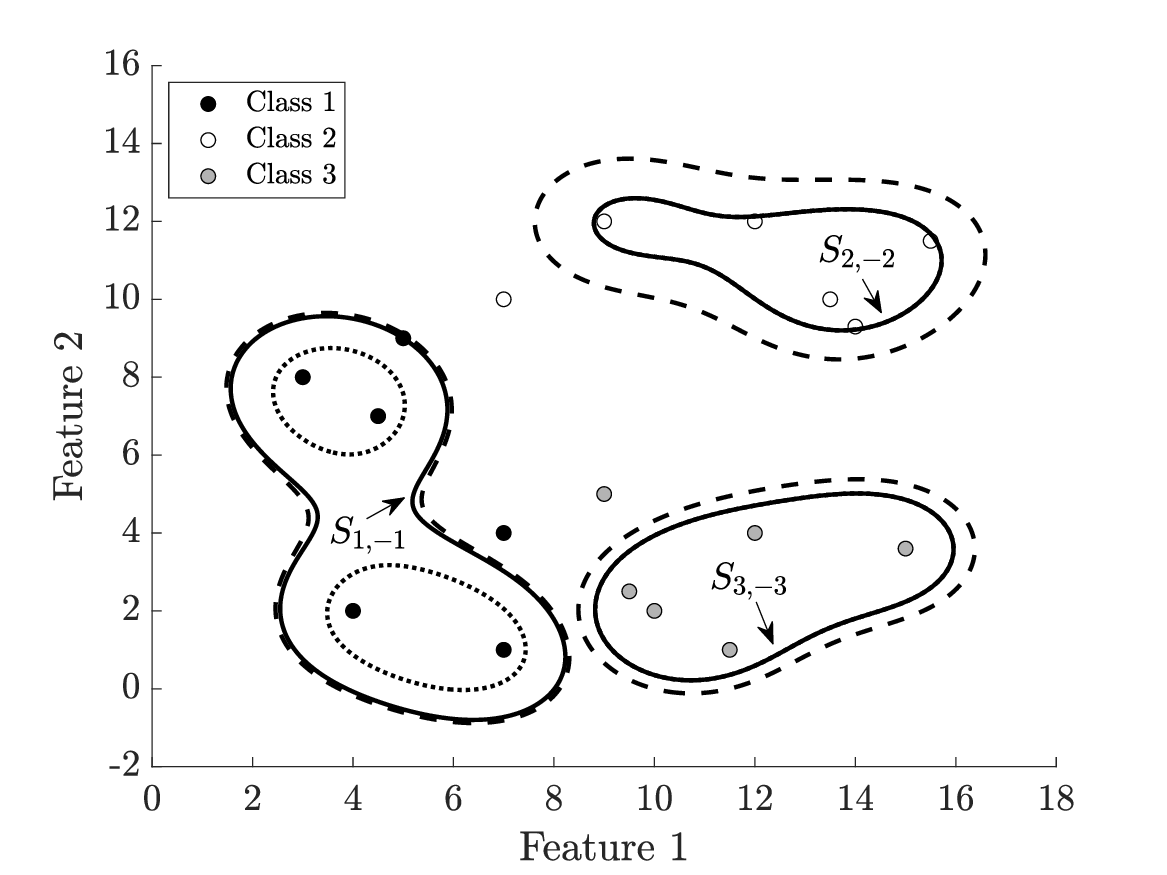}
         \caption{Gaussian RBF kernel ($\alpha=1.9$)}
         \label{fig_toy_2d_det_RBF_multiclass}
     \end{subfigure}
          \caption{Separating surfaces obtained with linear and Gaussian RBF kernel functions in the case of a three-classes classification task. For each class $l=1,2,3$, the dotted line and the dashed line represent respectively $S_{l}$ and $S_{-l}$.}
        \label{fig_toy_2d_det_multiclass}
\end{figure}

\section{A robust model for nonlinear SVM} \label{sec_robust_model}
In this section, we derive the robust counterpart of the deterministic approach discussed so far, when input data are plagued by uncertainties. According to the RO framework, we construct an uncertainty set around each observation and optimize against the worst-case realization across the entire uncertainty set (\cite{BerDunPawZhu2019}).

Contrariwise to RO models dealing with linear classification (see, for instance, \cite{FacMagPot2022}), in the nonlinear context data points $x^{(i)}$ are mapped into the feature space $\mathcal{H}$ via $\phi(\cdot)$ and uncertainty sets $\mathcal{U}_{\mathcal{H}}\big(\phi(x^{(i)})\big)$ have to be constructed. Unfortunately, a closed-form expression of $\phi(\cdot)$ is rarely available and an \emph{a priori} control about $\mathcal{U}_{\mathcal{H}}\big(\phi(x^{(i)})\big)$ is not possible. Therefore, further assumptions on the uncertainty set $\mathcal{U}_{\mathcal{H}}\big(\phi(x^{(i)}\big)$ in the feature space are necessary.

The remainder of the section is organized as follows. In Section \ref{subsec_uncertainty_set} bounded-by-$\ell_p$-norm uncertainty sets $\mathcal{U}_p(x^{(i)})$ are constructed, together with the corresponding ones $\mathcal{U}_{\mathcal{H}}\big(\phi(x^{(i)})\big)$ in the feature space. Bounds on the radii of $\mathcal{U}_{\mathcal{H}}\big(\phi(x^{(i)})\big)$ are derived in Section \ref{subsec_bounds}. Finally, in Section \ref{subsec_robust_model} the robust counterpart of models \eqref{GSVM} and \eqref{GSVM_multiclass} is rigorously deduced, together with computationally tractable reformulations.

\subsection{The construction of the uncertainty sets} \label{subsec_uncertainty_set}
We assume that each observation $x^{(i)}$ in the input space is subject to an additive and unknown perturbation vector $\sigma^{(i)}$, whose $\ell_p$-norm, with $p\in[1,\infty]$, is bounded by a nonnegative constant $\eta^{(i)}$. Consequently, the uncertainty set around $x^{(i)}$ has the following expression:
\begin{linenomath}
\begin{equation} \label{U_input_space}
\mathcal{U}_p(x^{(i)}):=\Set{ x\in\mathbb{R}^n : x=x^{(i)}+\sigma^{(i)}, \lVert \sigma^{(i)} \lVert_p \leq \eta^{(i)} }.
\end{equation}
\end{linenomath}

Parameter $\eta^{(i)}$ calibrates the degree of conservatism: if $\eta^{(i)}=0$, then $\sigma^{(i)}$ is the zero vector of $\mathbb{R}^n$ and $\mathcal{U}_p(x^{(i)})$ coincides with $x^{(i)}$. Popular choices for the $\ell_p$-norm in the RO literature are $p=1,2,\infty$, leading to polyhedral, spherical and box uncertainty sets, respectively.

In order to consider the extension towards the feature space, we now assume that, if $x$ belongs to $\mathcal{U}_p(x^{(i)})$, then:
\begin{linenomath}
\begin{equation*}
\phi(x)=\phi(x^{(i)}+\sigma^{(i)})=\phi(x^{(i)})+\zeta^{(i)},
\end{equation*}
\end{linenomath}

where the perturbation $\zeta^{(i)}$ belongs to the feature space $\mathcal{H}$ and its $\mathcal{H}$-norm is bounded a nonnegative constant $\delta^{(i)}$. The latter may be unknown but it depends on the known bound $\eta^{(i)}$ in the input space, i.e. $\delta^{(i)}=\delta^{(i)}\big(\eta^{(i)}\big)$. If no uncertainty occurs in the input space, no uncertainty will occur in the feature space too: $\eta^{(i)}=0$ implies $\delta^{(i)}=0$. Hence, the uncertainty set around $\phi(x^{(i)})$ in the feature space is modelled as:
\begin{linenomath}
\begin{equation} \label{U_feature_space}
\mathcal{U}_{\mathcal{H}}\big(\phi(x^{(i)})\big):=\Set{ z\in\mathcal{H} : z=\phi(x^{(i)})+\zeta^{(i)}, \lVert \zeta^{(i)} \lVert_{\mathcal{H}} \leq \delta^{(i)} }.
\end{equation}
\end{linenomath}

\subsection{Bounds on the uncertainty sets in the feature space} \label{subsec_bounds}
Let $k(\cdot,\cdot)$ be a symmetric and positive semidefinite kernel, with corresponding feature map $\phi(\cdot)$. 
In the following, we derive closed-form expressions for the radius $\delta^{(i)}$ in the feature space given the bound $\eta^{(i)}$ in the input space, when $k(\cdot,\cdot)$ is the polynomial kernel or the Gaussian RBF kernel. Below, we provide the results and relegate the proofs to  \ref{appendix_proofs}.

\begin{proposition}[\textbf{Polynomial kernel}] \label{lemma_inhom_pol}
Let $\mathcal{U}_p(x^{(i)})$ and $\mathcal{U}_{\mathcal{H}}\big(\phi(x^{(i)})\big)$ be the uncertainty sets in the input and in the feature space as in \eqref{U_input_space} and \eqref{U_feature_space}, respectively, with $p\in[1,\infty]$. Consider the inhomogeneous polynomial kernel of degree $d\in\mathbb{N}$ and additive constant $c\geq 0$, with radius $\delta^{(i)}\equiv\delta^{(i)}_{d,c}$, and:
\begin{linenomath}
\begin{equation*}
  C=C(n,p)= \begin{cases}
    1, & 1\leq p \leq 2\\
n^{\textstyle \frac{p-2}{2p}}, & p > 2.
  \end{cases}
\end{equation*}
\end{linenomath}

\begin{itemize}
\item[{(i)}] If $d=1$, then the radius of $\mathcal{U}_{\mathcal{H}}\big(\phi(x^{(i)})\big)$ is:
\begin{linenomath}
\begin{equation} \label{delta_ihhomogeneous_1}
\delta^{(i)}_{1,c} =  C\eta^{(i)}.
\end{equation}
\end{linenomath}

\item[{(ii)}] If $d > 1$, then:
\begin{linenomath}
\begin{equation} \label{delta_inhomogeneous}
\delta^{(i)}_{d,c} =  \sqrt{\big(\delta^{(i)}_{d,0}\big)^2+\sum_{k=1}^{d-1}\binom{d}{k}c^k\bigg[\sum_{j=1}^{d-k} \binom{d-k}{j} \norm{x^{(i)}}_2^{d-k-j}\big(C\eta^{(i)}\big)^j\bigg]^2},
\end{equation}
\end{linenomath}
where $\delta^{(i)}_{d,0}$ is the bound for the corresponding homogeneous polynomial kernel:
\begin{linenomath}
\begin{equation} \label{delta_homogeneous}
\delta^{(i)}_{d,0} =  \sum_{k=1}^d \binom{d}{k} \norm{x^{(i)}}_2^{d-k}\big(C\eta^{(i)}\big)^k.
\end{equation}
\end{linenomath}
\end{itemize}

Notice that when $c=0$, equation \eqref{delta_inhomogeneous} reduces to  \eqref{delta_homogeneous}.
\end{proposition}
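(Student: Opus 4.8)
The plan is to exhibit, for every $x\in\mathcal U_p(x^{(i)})$, the uniform bound $\|\phi(x)-\phi(x^{(i)})\|_{\mathcal H}\le\delta^{(i)}_{d,c}$, so that $\phi(x)\in\mathcal U_{\mathcal H}(\phi(x^{(i)}))$ and the stated formulas are valid radii. Throughout I write $x=x^{(i)}+\sigma$ with $\|\sigma\|_p\le\eta^{(i)}$, and I use two ingredients repeatedly. The first is the kernel identity $\|\phi(x)-\phi(x')\|_{\mathcal H}^2=k(x,x)-2k(x,x')+k(x',x')$, which holds for any feature map and makes the bound independent of the concrete realization of $\mathcal H$. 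The second is the elementary norm comparison $\|\sigma\|_2\le C(n,p)\,\|\sigma\|_p$: for $1\le p\le 2$ this is $\|\sigma\|_2\le\|\sigma\|_p$, and for $p>2$ it is Hölder's inequality $\|\sigma\|_2\le n^{1/2-1/p}\|\sigma\|_p=n^{(p-2)/(2p)}\|\sigma\|_p$; in either case $\|\sigma\|_2\le C\eta^{(i)}$. Part (i) is then immediate: for $d=1$ one has $k(x,x')=c+\langle x,x'\rangle$, so the identity collapses to $\|\phi(x)-\phi(x^{(i)})\|_{\mathcal H}^2=\|x-x^{(i)}\|_2^2=\|\sigma\|_2^2\le(C\eta^{(i)})^2$, which is \eqref{delta_ihhomogeneous_1}.

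For $d>1$, I would expand $(c+\langle x,x'\rangle)^d=\sum_{k=0}^d\binom{d}{k}c^k\langle x,x'\rangle^{d-k}$ and recall that the homogeneous kernel $\langle x,x'\rangle^{m}$ admits the feature map $x\mapsto x^{\otimes m}$ (the $m$-fold tensor power, with $\langle x^{\otimes m},y^{\otimes m}\rangle=\langle x,y\rangle^{m}$). Hence a feature map for the inhomogeneous kernel of degree $d$ is the direct sum $\phi(x)=\bigl(\sqrt{\binom{d}{k}c^{k}}\,x^{\otimes(d-k)}\bigr)_{k=0}^{d}$, so that
\[
\|\phi(x)-\phi(x^{(i)})\|_{\mathcal H}^{2}=\sum_{k=0}^{d}\binom{d}{k}c^{k}\,\bigl\|x^{\otimes(d-k)}-(x^{(i)})^{\otimes(d-k)}\bigr\|^{2}.
\]
Here the $k=d$ term is zero, the $k=0$ term is the homogeneous contribution (and, once bounded, produces $(\delta^{(i)}_{d,0})^2$), and the terms $1\le k\le d-1$ are exactly the summands appearing in \eqref{delta_inhomogeneous}; so everything reduces to estimating $\|x^{\otimes m}-(x^{(i)})^{\otimes m}\|$.

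The core step is this tensor estimate. I would write $x^{\otimes m}=(x^{(i)}+\sigma)^{\otimes m}$, expand by multilinearity over the $2^m$ choices of which of the $m$ factors equals $\sigma$ (indexed by a subset $S\subseteq\{1,\dots,m\}$), observe that the $S=\emptyset$ term cancels $(x^{(i)})^{\otimes m}$, and apply the triangle inequality together with $\|a_1\otimes\cdots\otimes a_m\|=\prod_{j}\|a_j\|_2$; grouping the $\binom{m}{j}$ terms with $|S|=j$ and using $\|\sigma\|_2\le C\eta^{(i)}$ yields
\[
\bigl\|x^{\otimes m}-(x^{(i)})^{\otimes m}\bigr\|\le\sum_{j=1}^{m}\binom{m}{j}\|\sigma\|_2^{\,j}\,\|x^{(i)}\|_2^{\,m-j}\le\sum_{j=1}^{m}\binom{m}{j}\bigl(C\eta^{(i)}\bigr)^{j}\|x^{(i)}\|_2^{\,m-j}.
\]
Taking $m=d$ gives \eqref{delta_homogeneous}, taking $m=d-k$ gives the bracketed factor of \eqref{delta_inhomogeneous}, and substituting both into the sum-of-squares decomposition above (bounding term by term) and taking square roots gives \eqref{delta_inhomogeneous}; since $c=0$ annihilates every term with $1\le k\le d-1$, it collapses to \eqref{delta_homogeneous}. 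I expect the only real obstacle to be organisational rather than conceptual: getting the two binomial expansions (over $c$ and over the tensor slots) to interlock so that the powers of $\|x^{(i)}\|_2$, $C\eta^{(i)}$ and $c$ match the stated formulas exactly, and justifying $\|a_1\otimes\cdots\otimes a_m\|=\prod_j\|a_j\|_2$ for the induced inner product on tensor powers — the analytic content is merely the triangle inequality and the $\ell_p$–$\ell_2$ comparison.
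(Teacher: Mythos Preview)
Your proposal is correct, and it takes a genuinely different route from the paper. The paper never writes down a feature map: it works entirely with scalar kernel values, expanding $k(x^{(i)}+\sigma,x^{(i)}+\sigma)-2k(x^{(i)}+\sigma,x^{(i)})+k(x^{(i)},x^{(i)})$ for the polynomial kernel, applying Cauchy--Schwarz to replace every occurrence of $\langle\sigma,x^{(i)}\rangle$ by $\|\sigma\|_2\|x^{(i)}\|_2$, and then using a first binomial expansion in $c$ followed by a second one in $\|x^{(i)}\|_2+\|\sigma\|_2$ to recognise perfect squares. You instead realise $\phi$ concretely as the direct sum $\bigl(\sqrt{\binom{d}{k}c^{k}}\,x^{\otimes(d-k)}\bigr)_{k}$, which makes the Pythagorean decomposition $\|\phi(x)-\phi(x^{(i)})\|^2=\sum_k\binom{d}{k}c^k\|x^{\otimes(d-k)}-(x^{(i)})^{\otimes(d-k)}\|^2$ immediate, and then bound each tensor difference via multilinearity and the triangle inequality. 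Your approach is more structural and arguably cleaner: the direct-sum structure is visible from the outset, every inequality is a plain triangle inequality, and you sidestep the paper's need to argue that replacing $\langle\sigma,x^{(i)}\rangle$ by its Cauchy--Schwarz upper bound inside a nonlinear expression actually yields an upper bound. The paper's approach, in exchange, avoids tensor-product machinery and any appeal to an explicit feature map, staying entirely at the level of kernel evaluations.
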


\begin{proposition}[\textbf{Gaussian RBF kernel}] \label{lemma_RBF_gaussian}
Let $\mathcal{U}_p(x^{(i)})$ and $\mathcal{U}_{\mathcal{H}}\big(\phi(x^{(i)})\big)$ be the uncertainty sets in the input and in the feature space as in \eqref{U_input_space} and \eqref{U_feature_space}, respectively, with $p\in[1,\infty]$. Consider the Gaussian RBF kernel with parameter $\alpha > 0$ and radius $\delta^{(i)}\equiv\delta_{\alpha}^{(i)}$. If:
\begin{linenomath}
\begin{equation*}
  C=C(n,p)= \begin{cases}
    1, & 1\leq p \leq 2\\
n^{\textstyle \frac{p-2}{2p}}, & p > 2,
  \end{cases}
\end{equation*}
\end{linenomath}
then:
\begin{linenomath}
\begin{equation} \label{delta_RBF}
\delta^{(i)}_{\alpha} =  \sqrt{2-2\exp\bigg(\displaystyle -\frac{{(C\eta^{(i)})}^2}{2\alpha^2}\bigg)}.
\end{equation}
\end{linenomath}

\end{proposition}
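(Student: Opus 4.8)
\emph{Proof proposal.} The plan is to compute $\delta^{(i)}_{\alpha}$ as the exact supremum of $\norm{\phi(x)-\phi(x^{(i)})}_{\mathcal{H}}$ over $x\in\mathcal{U}_p(x^{(i)})$, exploiting that the Gaussian RBF kernel normalises every feature vector to unit $\mathcal{H}$-norm.

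\textbf{Step 1 (reduce the feature-space norm to kernel evaluations).} For any $x=x^{(i)}+\sigma^{(i)}$ with $\norm{\sigma^{(i)}}_p\leq\eta^{(i)}$, set $\zeta^{(i)}:=\phi(x)-\phi(x^{(i)})$ and expand the induced norm:
\[
\norm{\zeta^{(i)}}_{\mathcal{H}}^2=\langle\phi(x),\phi(x)\rangle-2\langle\phi(x),\phi(x^{(i)})\rangle+\langle\phi(x^{(i)}),\phi(x^{(i)})\rangle=k(x,x)-2k(x,x^{(i)})+k(x^{(i)},x^{(i)}).
\]
Since $k(u,u)=\exp(0)=1$ for every $u\in\mathbb{R}^n$, this collapses to $\norm{\zeta^{(i)}}_{\mathcal{H}}^2=2-2k(x,x^{(i)})=2-2\exp\!\big(-\norm{\sigma^{(i)}}_2^2/(2\alpha^2)\big)$.

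\textbf{Step 2 (monotonicity).} The map $t\mapsto 2-2\exp(-t/(2\alpha^2))$ is strictly increasing on $[0,\infty)$, so the supremum of $\norm{\zeta^{(i)}}_{\mathcal{H}}$ over $\mathcal{U}_p(x^{(i)})$ is attained by maximising $\norm{\sigma^{(i)}}_2$ subject to $\norm{\sigma^{(i)}}_p\leq\eta^{(i)}$. Hence $\delta^{(i)}_{\alpha}=\sqrt{2-2\exp(-r^2/(2\alpha^2))}$ with $r:=\sup\{\norm{\sigma}_2:\sigma\in\mathbb{R}^n,\ \norm{\sigma}_p\leq\eta^{(i)}\}$.

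\textbf{Step 3 ($\ell_p$ versus $\ell_2$) and conclusion.} It remains to identify $r=C\eta^{(i)}$. For $1\leq p\leq 2$, monotonicity of $\ell_p$-norms gives $\norm{\sigma}_2\leq\norm{\sigma}_p$ for all $\sigma$, with equality when $\sigma$ is supported on a single coordinate, so $r=\eta^{(i)}$ and $C=1$. For $p>2$, the power-mean (Hölder) inequality yields the sharp estimate $\norm{\sigma}_2\leq n^{1/2-1/p}\norm{\sigma}_p$, with equality when all components of $\sigma$ have equal modulus, so $r=n^{(p-2)/(2p)}\eta^{(i)}$ and $C=n^{(p-2)/(2p)}$. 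Substituting $r=C\eta^{(i)}$ into the formula from Step 2 gives \eqref{delta_RBF}, and clearly $\eta^{(i)}=0$ forces $\delta^{(i)}_{\alpha}=0$, as required. The only mildly delicate point is the sharp constant in the $\ell_p$--$\ell_2$ comparison for $p>2$ together with its attainment, which is exactly what forces the two-case definition of $C(n,p)$; the rest is a direct kernel expansion.
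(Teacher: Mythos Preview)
Your proposal is correct and follows essentially the same route as the paper: expand $\norm{\zeta^{(i)}}_{\mathcal{H}}^2$ via the reproducing property, use $k(x,x)=1$ to reduce to $2-2\exp(-\norm{\sigma^{(i)}}_2^2/(2\alpha^2))$, then combine monotonicity with the $\ell_p$--$\ell_2$ comparison $\norm{\sigma}_2\leq C\norm{\sigma}_p$. Your treatment is slightly more thorough in that you argue the constant $C$ is sharp by exhibiting equality cases, whereas the paper simply derives the upper bound without discussing attainment.
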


We observe that Propositions \ref{lemma_inhom_pol}-\ref{lemma_RBF_gaussian} are consistent with Lemma 7 presented in \cite{XuCarMan2009}. However, in this paper we specify the bounds for particular choices of the kernel functions. In addition, we extend the result for a bounded-by-$\ell_p$-norm uncertainty set for a generic $p\in[1,\infty]$.

\subsection{The robust model} \label{subsec_robust_model}
Robustifying model \eqref{GSVM} against the uncertainty set $\mathcal{U}_p(x^{(i)})$ yields the following optimization program:
\begin{linenomath}
\begin{equation} \label{rob0_GSVM}
\begin{aligned}
\min_{u,\gamma,\xi} \quad &  \norm{u}_{q}^{q}+\nu \sum_{i=1}^m\xi_i\\
\text{s.t.} \quad & y^{(i)}\sum_{j=1}^m k(x,x^{(j)})y^{(j)}u_j\geq 1-\xi_i+y^{(i)}\gamma & \forall x \in \mathcal{U}_p(x^{(i)}),  \text{ }i=1,\ldots,m\\
 & \xi_i\geq 0 & \quad  i=1,\ldots,m.
\end{aligned}
\end{equation}
\end{linenomath}

Model \eqref{rob0_GSVM} cannot be solved in practice due to the infinite possibilities for choosing $x$ in $\mathcal{U}_p(x^{(i)})$. Nevertheless, it can be reformulated in a tractable form, as stated in the following theorem.

\begin{theorem}\label{teo_rob}
Let $\mathcal{U}_p(x^{(i)})$ and $\mathcal{U}_{\mathcal{H}}\big(\phi(x^{(i)})\big)$ be the uncertainty sets in the input and in the feature space as in \eqref{U_input_space} and \eqref{U_feature_space}, respectively, with $p\in[1,\infty]$. Model \eqref{rob0_GSVM} is equivalent to:
\begin{linenomath}
\begin{equation} \label{rob_GSVM}
\begin{aligned}
\min_{u,\gamma,\xi}  \quad & \norm{u}_{q}^{q}+\nu \sum_{i=1}^m\xi_i\\
\text{s.t.} \quad & y^{(i)}\sum_{j=1}^m K_{ij}y^{(j)}u_j -\delta^{(i)}\sum_{j=1}^{m} \sqrt{K_{jj}} \abs{u_j} \geq 1-\xi_i+y^{(i)}\gamma & i=1,\ldots,m\\
 & \xi_i\geq 0 &  i=1,\ldots,m.
\end{aligned}
\end{equation}
\end{linenomath}
\end{theorem}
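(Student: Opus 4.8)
The plan is to robustify each constraint of model \eqref{rob0_GSVM} separately by pushing the perturbation into the feature space and then solving the inner worst-case problem in closed form. First I would fix an index $i$ and rewrite the $i$-th robust constraint using the kernel decomposition $\overline{w}=\sum_{j=1}^m y^{(j)}u_j\phi(x^{(j)})$ established in Section \ref{sec_review_methods}: for every $x\in\mathcal{U}_p(x^{(i)})$ we have $\sum_{j=1}^m k(x,x^{(j)})y^{(j)}u_j=\langle \overline{w},\phi(x)\rangle$. By the construction of the uncertainty sets in Section \ref{subsec_uncertainty_set}, as $x$ ranges over $\mathcal{U}_p(x^{(i)})$ the image $\phi(x)$ ranges within $\mathcal{U}_{\mathcal{H}}\big(\phi(x^{(i)})\big)=\{\phi(x^{(i)})+\zeta^{(i)}:\norm{\zeta^{(i)}}_{\mathcal{H}}\le\delta^{(i)}\}$, so the constraint ``$y^{(i)}\langle\overline{w},\phi(x)\rangle\ge 1-\xi_i+y^{(i)}\gamma$ for all $x\in\mathcal{U}_p(x^{(i)})$'' is implied by the more conservative requirement that it hold for all $\zeta^{(i)}$ with $\norm{\zeta^{(i)}}_{\mathcal{H}}\le\delta^{(i)}$. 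This reduces the $i$-th constraint to
\begin{linenomath}
\begin{equation*}
y^{(i)}\langle\overline{w},\phi(x^{(i)})\rangle + \min_{\norm{\zeta^{(i)}}_{\mathcal{H}}\le\delta^{(i)}} y^{(i)}\langle\overline{w},\zeta^{(i)}\rangle \geq 1-\xi_i+y^{(i)}\gamma .
\end{equation*}
\end{linenomath}

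Next I would evaluate the inner minimization. Since $y^{(i)}\in\{-1,+1\}$, by Cauchy--Schwarz in $\mathcal{H}$ the term $\min_{\norm{\zeta}_{\mathcal{H}}\le\delta^{(i)}} y^{(i)}\langle\overline{w},\zeta\rangle$ equals $-\delta^{(i)}\norm{\overline{w}}_{\mathcal{H}}$, attained at $\zeta=-y^{(i)}\delta^{(i)}\overline{w}/\norm{\overline{w}}_{\mathcal{H}}$. It remains to bound $\norm{\overline{w}}_{\mathcal{H}}$ in terms of the decision variables $u_j$ and the Gram matrix. Expanding,
\begin{linenomath}
\begin{equation*}
\norm{\overline{w}}_{\mathcal{H}} = \Bigl\lVert \sum_{j=1}^m y^{(j)}u_j\phi(x^{(j)})\Bigr\rVert_{\mathcal{H}} \le \sum_{j=1}^m \abs{u_j}\,\norm{\phi(x^{(j)})}_{\mathcal{H}} = \sum_{j=1}^m \abs{u_j}\sqrt{K_{jj}},
\end{equation*}
\end{linenomath}
using the triangle inequality, $\abs{y^{(j)}}=1$, and $\norm{\phi(x^{(j)})}_{\mathcal{H}}^2=\langle\phi(x^{(j)}),\phi(x^{(j)})\rangle=k(x^{(j)},x^{(j)})=K_{jj}$. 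Substituting this bound and recalling $\langle\overline{w},\phi(x^{(i)})\rangle=\sum_{j=1}^m K_{ij}y^{(j)}u_j$ yields exactly the $i$-th constraint of \eqref{rob_GSVM}. Doing this for every $i=1,\ldots,m$, and noting the objective and the constraints $\xi_i\ge 0$ are unchanged, gives the claimed equivalence.

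The step I expect to require the most care is justifying that replacing the original robust constraint by the feature-space version is not merely a sufficient condition but an equivalence, i.e. that the reformulated model \eqref{rob_GSVM} is genuinely \emph{equivalent} to \eqref{rob0_GSVM} rather than a conservative relaxation. The clean route is to take the feature-space uncertainty model \eqref{U_feature_space} as the governing definition of robustness in the nonlinear setting — which is precisely the modeling assumption set up in Section \ref{subsec_uncertainty_set} — so that \eqref{rob0_GSVM} is to be read with $\phi(x)$ ranging over $\mathcal{U}_{\mathcal{H}}\big(\phi(x^{(i)})\big)$; then the two steps above are exact and the triangle-inequality bound on $\norm{\overline w}_{\mathcal H}$ is the only inequality, which holds with equality for the worst-case alignment of signs, preserving the feasible set. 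I would also remark that the optimal $u$ in $\overline w=\sum_j y^{(j)}u_j\phi(x^{(j)})$ is guaranteed to exist by the representer-type argument already invoked for \eqref{phiSVM}, so passing between $\overline w$ and $u$ loses nothing.
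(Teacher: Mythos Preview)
Your approach is essentially the same as the paper's: pass to the feature space via the assumption linking $\mathcal{U}_p(x^{(i)})$ and $\mathcal{U}_{\mathcal{H}}(\phi(x^{(i)}))$, split off the nominal term, and bound the perturbation term using Cauchy--Schwarz together with $\norm{\phi(x^{(j)})}_{\mathcal{H}}=\sqrt{K_{jj}}$. The only organisational difference is that the paper applies the triangle inequality first and then Cauchy--Schwarz term-by-term to each $\langle\zeta^{(i)},\phi(x^{(j)})\rangle$, whereas you first compute the exact inner minimum $-\delta^{(i)}\norm{\overline w}_{\mathcal{H}}$ and only then invoke the triangle inequality on $\norm{\overline w}_{\mathcal{H}}$; both routes land on the same bound $\delta^{(i)}\sum_j\sqrt{K_{jj}}\,|u_j|$.

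One correction to your closing paragraph: the claim that the triangle-inequality step ``holds with equality for the worst-case alignment of signs'' is not right --- equality in $\bigl\lVert\sum_j y^{(j)}u_j\phi(x^{(j)})\bigr\rVert_{\mathcal{H}}\le\sum_j|u_j|\sqrt{K_{jj}}$ requires the vectors $\phi(x^{(j)})$ to be nonnegatively collinear, not merely a choice of signs. The paper's proof is equally informal on this point (it simply asserts that the derived bound \emph{is} the optimal value), so in effect \eqref{rob_GSVM} is a tractable conservative reformulation of \eqref{rob0_GSVM} under the feature-space uncertainty model, and both proofs establish exactly that.
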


\begin{proof}
The first set of constraints of model \eqref{rob0_GSVM} is equivalent to:
\begin{linenomath}
\begin{equation}\label{proof_teo_rob}
\begin{aligned}
\min_{x \in \mathcal{U}_p(x^{(i)})} \quad & y^{(i)}\sum_{j=1}^m k(x,x^{(j)})y^{(j)}u_j\geq 1-\xi_i+y^{(i)}\gamma \qquad i=1,\ldots,m.\\
\end{aligned}
\end{equation}
\end{linenomath}

Due to the definition of $\mathcal{U}_p(x^{(i)})$, for all $i=1,\ldots,m$ the left-hand side of \eqref{proof_teo_rob} can be re-stated as:
\begin{linenomath}
\begin{equation}\label{interm_rob_mod}
\begin{aligned}
\min_{\sigma^{(i)}}  \quad & y^{(i)}\sum_{j=1}^m k(x^{(i)}+\sigma^{(i)},x^{(j)})y^{(j)}u_j\\
\text{s.t.} \quad & \lVert \sigma^{(i)} \lVert_p \leq \eta^{(i)}.
\end{aligned}
\end{equation}
\end{linenomath}

According to the definition of the kernel function and the assumption on $\mathcal{U}_{\mathcal{H}}\big(\phi(x^{(i)})\big)$, we have that:
\begin{linenomath}
\begin{equation*}
k(x^{(i)}+\sigma^{(i)},x^{(j)})=\langle \phi(x^{(i)}+\sigma^{(i)}),\phi(x^{(j)}) \rangle = \langle\phi(x^{(i)})+\zeta^{(i)},\phi(x^{(j)})\rangle.
\end{equation*}
\end{linenomath}

Moreover, the linearity of the dot product in the feature space $\mathcal{H}$ implies that model \eqref{interm_rob_mod} can be written as:
\begin{linenomath}
\begin{equation} \label{rob_zeta}
\begin{aligned}
\min_{\zeta^{(i)}}  \quad & y^{(i)}\sum_{j=1}^m \langle\zeta^{(i)}, \phi(x^{(j)})\rangle\text{ } y^{(j)}u_j\\
\text{s.t.} \quad & \lVert \zeta^{(i)} \lVert_{\mathcal{H}} \leq \delta^{(i)},
\end{aligned}
\end{equation}
\end{linenomath}
where the term $y^{(i)}\sum_{j=1}^m \langle \phi(x^{(i)}), \phi(x^{(j)})\rangle\text{ } y^{(j)}u_j$ is equivalent to $y^{(i)}\sum_{j=1}^m K_{ij}y^{(j)}u_j$. Being independent of $\zeta^{(i)}$, it is moved to the right-hand side of \eqref{proof_teo_rob}.

Then, the modulus of the objective function of model \eqref{rob_zeta} can be bounded by $\sum_{j=1}^m \abs{\langle \zeta^{(i)}, \phi(x^{(j)}) \rangle} \cdot \abs{u_j}$. By applying the Cauchy-Schwarz inequality in $\mathcal{H}$ and the boundedness condition on $\norm{\zeta^{(i)}}_{\mathcal{H}}$, we get:
\begin{linenomath}
\begin{equation*}
\abs{\langle \zeta^{(i)}, \phi(x^{(j)}) \rangle} \leq \norm{\zeta^{(i)}}_{\mathcal{H}} \cdot \norm{\phi(x^{(j)})}_{\mathcal{H}} \leq \delta^{(i)} \cdot \sqrt{\langle \phi(x^{(j)}), \phi(x^{(j)}) \rangle}= \delta^{(i)} \cdot \sqrt{K_{jj}}.
\end{equation*}
\end{linenomath}

The value $K_{jj}$ is nonnegative, due to the positive semidefiniteness of the Gram matrix $K$. Therefore, we obtain:
\begin{linenomath}
\begin{equation} \label{last_line_teo}
\abs{y^{(i)}\sum_{j=1}^m \langle\zeta^{(i)}, \phi(x^{(j)})\rangle\text{ } y^{(j)}u_j } \leq \delta^{(i)}\sum_{j=1}^{m} \sqrt{K_{jj}} \abs{u_j}.
\end{equation}
\end{linenomath}

Thus, the optimal value of model \eqref{rob_zeta} is $-\delta^{(i)}\sum_{j=1}^{m} \sqrt{K_{jj}} \abs{u_j}$. By replacing the minimization term with this optimal value in the first set of constraints of \eqref{proof_teo_rob}, the thesis follows.
\end{proof}

When no uncertainty occurs in the data, $\delta^{(i)}=0$ for all $i=1,\ldots,m$ and the robust model \eqref{rob_GSVM} reduces to the deterministic formulation \eqref{GSVM}.

Model \eqref{rob_GSVM} is a convex nonlinear optimization model due to the presence of the $\ell_{q}$-norm of $u$. Nevertheless, it can be reformulated as a \emph{Linear Programming} (LP) problem when $q=1$ or $q=\infty$ and as a SOCP problem when $q=2$, as stated in the following result. The proof is provided in \ref{appendix_proofs}.

\begin{corollary} \label{coroll_LP_SOCP}
Model \eqref{rob_GSVM} can be expressed as a LP problem or as a SOCP problem in the following cases:
\begin{itemize}
\item[a)] Case $q=1$: LP problem
\begin{linenomath}
\begin{equation} \label{robust_q1}
\begin{aligned}
\min_{u,\gamma,\xi,s}  \quad & \sum_{i=1}^m s_i+\nu \sum_{i=1}^m\xi_i\\
\text{s.t.} \quad & y^{(i)}\sum_{j=1}^m K_{ij}y^{(j)}u_j -\delta^{(i)}\sum_{j=1}^{m} \sqrt{K_{jj}} s_j \geq 1-\xi_i+y^{(i)}\gamma & i=1,\ldots,m\\
& s_i \geq - u_i & i=1,\ldots,m\\
& s_i \geq u_i & i=1,\ldots,m\\
 & s_i,\xi_i\geq 0 & i=1,\ldots,m.
\end{aligned}
\end{equation}
\end{linenomath}

\item[b)] Case $q=2$: SOCP problem
\begin{linenomath}
\begin{equation} \label{robust_q2}
\begin{aligned}
\min_{u,\gamma,\xi,s,r,t,v}  \quad & r-v+\nu \sum_{i=1}^m\xi_i\\
\text{s.t.} \quad & y^{(i)}\sum_{j=1}^m K_{ij}y^{(j)}u_j -\delta^{(i)}\sum_{j=1}^{m} \sqrt{K_{jj}} s_j \geq 1-\xi_i+y^{(i)}\gamma & i=1,\ldots,m\\
& t \geq \norm{u}_2 \\
& r+v=1 \\
& r \geq \sqrt{t^2+v^2} \\
& s_i \geq - u_i & i=1,\ldots,m\\
& s_i \geq u_i & i=1,\ldots,m\\
 & s_i,\xi_i\geq 0 & i=1,\ldots,m.
\end{aligned}
\end{equation}
\end{linenomath}

\item[c)] Case $q=\infty$: LP problem
\begin{linenomath}
\begin{equation} \label{robust_qinfty}
\begin{aligned}
\min_{u,\gamma,\xi,s,s_\infty}  \quad & s_\infty+\nu \sum_{i=1}^m\xi_i\\
\text{s.t.} \quad & y^{(i)}\sum_{j=1}^m K_{ij}y^{(j)}u_j -\delta^{(i)}\sum_{j=1}^{m} \sqrt{K_{jj}} s_j \geq 1-\xi_i+y^{(i)}\gamma & i=1,\ldots,m\\
& s_\infty \geq - u_i & i=1,\ldots,m\\
& s_\infty \geq u_i & i=1,\ldots,m\\
& s_i \geq - u_i & i=1,\ldots,m\\
& s_i \geq u_i & i=1,\ldots,m\\
& s_\infty \geq 0\\
 & s_i,\xi_i\geq 0 & i=1,\ldots,m.
\end{aligned}
\end{equation}
\end{linenomath}
\end{itemize}
\end{corollary}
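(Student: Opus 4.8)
The plan is to handle the three cases separately, in each one rewriting the two nonsmooth ingredients of \eqref{rob_GSVM} — the objective term $\norm{u}_q^q$ and the moduli $\abs{u_j}$ that appear inside the robust constraints — by introducing auxiliary variables together with epigraph-type inequalities, and then verifying that the resulting linear or conic program is equivalent to \eqref{rob_GSVM} in the sense that the two share the same optimal value and that an optimal $u$ (hence the classifier) of one yields an optimal $u$ of the other. A single structural fact underlies every equivalence argument: since $\delta^{(i)}\geq 0$ and $\sqrt{K_{jj}}\geq 0$ (the latter by positive semidefiniteness of $K$), enlarging any $s_j$ in the term $-\delta^{(i)}\sum_j\sqrt{K_{jj}}s_j$ can only weaken the $i$-th constraint, so replacing $\abs{u_j}$ by a variable $s_j$ with $s_j\geq u_j$ and $s_j\geq -u_j$ preserves feasibility in the direction needed, while the reverse direction is obtained by simply setting $s_j=\abs{u_j}$.

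For the case $q=1$ I would introduce $s_j$ with $s_j\geq u_j$, $s_j\geq -u_j$ (whence $s_j\geq\abs{u_j}\geq 0$ automatically), replace $\norm{u}_1^1=\sum_j\abs{u_j}$ by $\sum_j s_j$ in the objective and each $\abs{u_j}$ by $s_j$ in the constraints, obtaining \eqref{robust_q1}. Any feasible point of \eqref{robust_q1} restricts to a feasible point of \eqref{rob_GSVM} with no larger objective value (because $s_j\geq\abs{u_j}$), and conversely a feasible point of \eqref{rob_GSVM} extends to a feasible point of \eqref{robust_q1} with equal objective by taking $s_j=\abs{u_j}$; minimizing $\sum_j s_j$ then forces $s_j=\abs{u_j}$ at optimality, so the optimal values agree. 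The case $q=\infty$ is the same idea, using the paper's convention $\norm{u}_\infty^\infty:=\norm{u}_\infty$: I add one extra variable $s_\infty$ with $s_\infty\geq u_i$ and $s_\infty\geq -u_i$ for all $i$ (so $s_\infty\geq\norm{u}_\infty$), still keep the componentwise $s_j\geq\abs{u_j}$ that are needed to linearize the moduli inside the constraints, put $s_\infty$ in the objective, and run the same feasibility-and-monotonicity argument to arrive at \eqref{robust_qinfty}, with $s_\infty=\norm{u}_\infty$ and $s_j=\abs{u_j}$ at an optimum.

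The case $q=2$ is the delicate one and I expect it to be the main obstacle, since $\norm{u}_2^2$ has to be captured by conic constraints together with a \emph{linear} objective. I would first write $t\geq\norm{u}_2$, a second-order cone constraint, so that $t^2\geq\norm{u}_2^2$, and then encode the square through a rotated second-order cone: add $r,v$ with $r+v=1$ and $r\geq\sqrt{t^2+v^2}$, and place $r-v$ in the objective. Squaring the cone inequality gives $r^2-v^2\geq t^2$, i.e. $(r-v)(r+v)\geq t^2$, and with $r+v=1$ this is $r-v\geq t^2\geq\norm{u}_2^2$; moreover $r\geq\sqrt{t^2+v^2}\geq 0$, so the cone constraint is genuine and, combined with $r+v=1$ and $r-v\geq 0$, it keeps $r,v$ in a consistent range. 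Minimizing $r-v$ drives both inequalities to equalities, so at an optimum $r-v=t^2=\norm{u}_2^2$, which recovers the objective of \eqref{rob_GSVM}; the moduli $\abs{u_j}$ inside the constraints are handled exactly as before with $s_j\geq\pm u_j$, and since these $s_j$ do not enter the objective only the "enlarging $s_j$ weakens the constraint" direction is required. Assembling all of this yields \eqref{robust_q2}, which is a SOCP because every added constraint is a linear equality, a linear inequality, or a second-order cone constraint. The remaining work is purely the bookkeeping of checking the two-way correspondence of feasible points and objective values, which follows the $q=1$ template.
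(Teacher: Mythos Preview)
Your proposal is correct and follows essentially the same route as the paper's proof: introduce auxiliary variables $s_j\geq\pm u_j$ to linearize the moduli in the constraints and in $\norm{u}_1$, a single variable $s_\infty\geq\pm u_i$ to capture $\norm{u}_\infty$, and the triple $(t,r,v)$ with $t\geq\norm{u}_2$, $r+v=1$, $r\geq\sqrt{t^2+v^2}$ to rewrite $\norm{u}_2^2$ as the linear term $r-v$. The paper states these substitutions tersely and refers to \cite{QiTianShi2013} for the $q=2$ rotated-cone construction, whereas you spell out the two-way feasibility/objective correspondence and verify that $r-v\geq t^2\geq\norm{u}_2^2$ with equality at optimum; your extra care is welcome, and the only quibble is the phrase ``enlarging $s_j$ weakens the constraint'' --- in fact enlarging $s_j$ \emph{tightens} the main inequality, which is precisely why feasibility of the reformulation (with $s_j\geq\abs{u_j}$) implies feasibility of \eqref{rob_GSVM}, as you correctly use one line later.
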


As in the deterministic setting, once $u$, $\gamma$ and $\xi$ are obtained as solutions of model \eqref{rob_GSVM}, then $\omega_{\mathcal{A}}$ and $\omega_{\mathcal{B}}$ are computed according to formulas \eqref{omega_nonlinear}. Finally, the optimal separating hypersurface $S=(u,b)$ is derived, where $b$ is the optimal solution of the following robust counterpart of problem $\eqref{linear_search_GSVM}$:
\begin{linenomath}
\begin{equation} \label{linear_search_robust}
\begin{aligned}
\min_{b}  \quad & \sum_{i=1}^m \mathbbm{1}\bigg[\bigg(y^{(i)}b-y^{(i)}\sum_{j=1}^m K_{ij}y^{(j)}u_j+\delta^{(i)}\sum_{j=1}^{m} \sqrt{K_{jj}} \abs{u_j}\bigg)_i\bigg]\\
\text{s.t.} \quad & \gamma+1-\omega_{\mathcal{B}}\leq b \leq \gamma-1+\omega_{\mathcal{A}}.
\end{aligned}
\end{equation}
\end{linenomath}

When dealing with a multiclass classification task, the robust extension of model \eqref{GSVM_multiclass} for the $l$-th class is given by:
\begin{linenomath}
\begin{equation} \label{GSVM_robust_multiclass}
\begin{aligned}
\min_{u_l,\gamma_l,\xi_l}  \quad & \norm{u_l}_{q}^q+\nu \sum_{i=1}^m\xi_{l,i}\\
\text{s.t.} \quad & \widehat{y}^{(i)}_l\sum_{j=1}^m K_{ij}\widehat{y}^{(j)}_lu_{l,j} -\delta^{(i)}\sum_{j=1}^{m} \sqrt{K_{jj}} \abs{u_{l,j}} \geq 1-\xi_{l,i}+\widehat{y}^{(i)}_l\gamma_l & \quad i=1,\ldots,m\\
 & \xi_{l,i}\geq 0 & \quad  i=1,\ldots,m.
\end{aligned}
\end{equation}
\end{linenomath}

The optimal parameter $b_l$ is the solution of:
\begin{linenomath}
\begin{equation} \label{linear_search_robust_multiclass}
\begin{aligned}
\min_{b_l}  \quad & \sum_{i=1}^m \mathbbm{1}\bigg[\bigg(\widehat{y}^{(i)}_lb_l-\widehat{y}^{(i)}_l\sum_{j=1}^m K_{ij}\widehat{y}^{(j)}_lu_{l,j}+\delta^{(i)}\sum_{j=1}^{m} \sqrt{K_{jj}} \abs{u_{l,j}}\bigg)_i\bigg]\\
\text{s.t.} \quad & \gamma_l+1-\omega_{-l}\leq b_l \leq \gamma_l-1+\omega_{l}.
\end{aligned}
\end{equation}
\end{linenomath}

Since the structural form of the robust models \eqref{rob_GSVM} and \eqref{GSVM_robust_multiclass} is the same as their deterministic equivalent, the time complexity analysis provides analogous results.

For the sake of illustration, we depict in Figure \ref{fig_kernel2_robust} the kernel-induced decision boundaries of the robust model \eqref{robust_q1}, considering the same dataset of Figure \ref{fig_toy_2d_det}. The model is trained for both spherical (see Figure \ref{fig_kernel_RBF_rob_p2}) and box (see Figure \ref{fig_kernel_RBF_rob_pinf}) uncertainty sets.

\begin{figure}[h!]
     \centering
  \begin{subfigure}[b]{0.49\textwidth}
         \centering
         \includegraphics[width=\textwidth]{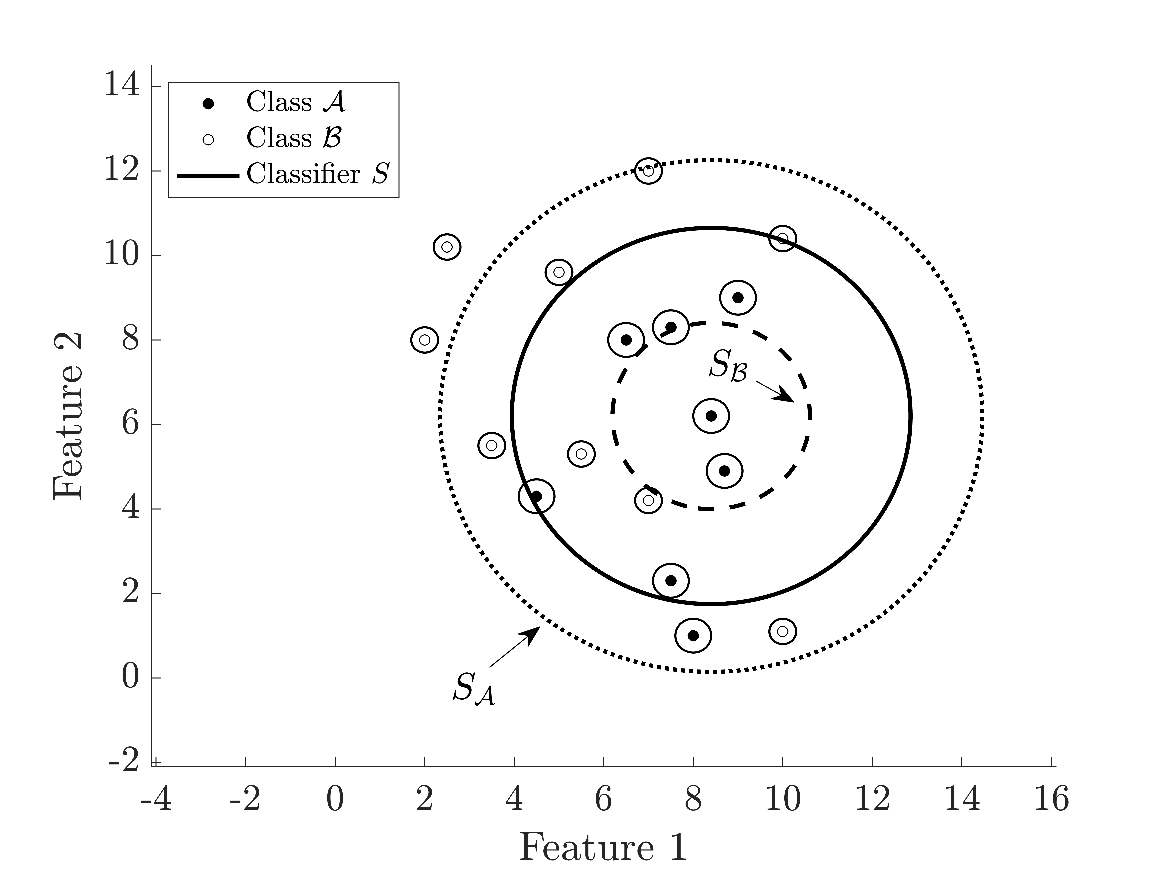}
         \caption{Gaussian RBF kernel ($\alpha=1.9$), $p=2$}
         \label{fig_kernel_RBF_rob_p2}
     \end{subfigure}
     \begin{subfigure}[b]{0.49\textwidth}
         \centering
\includegraphics[width=\textwidth]{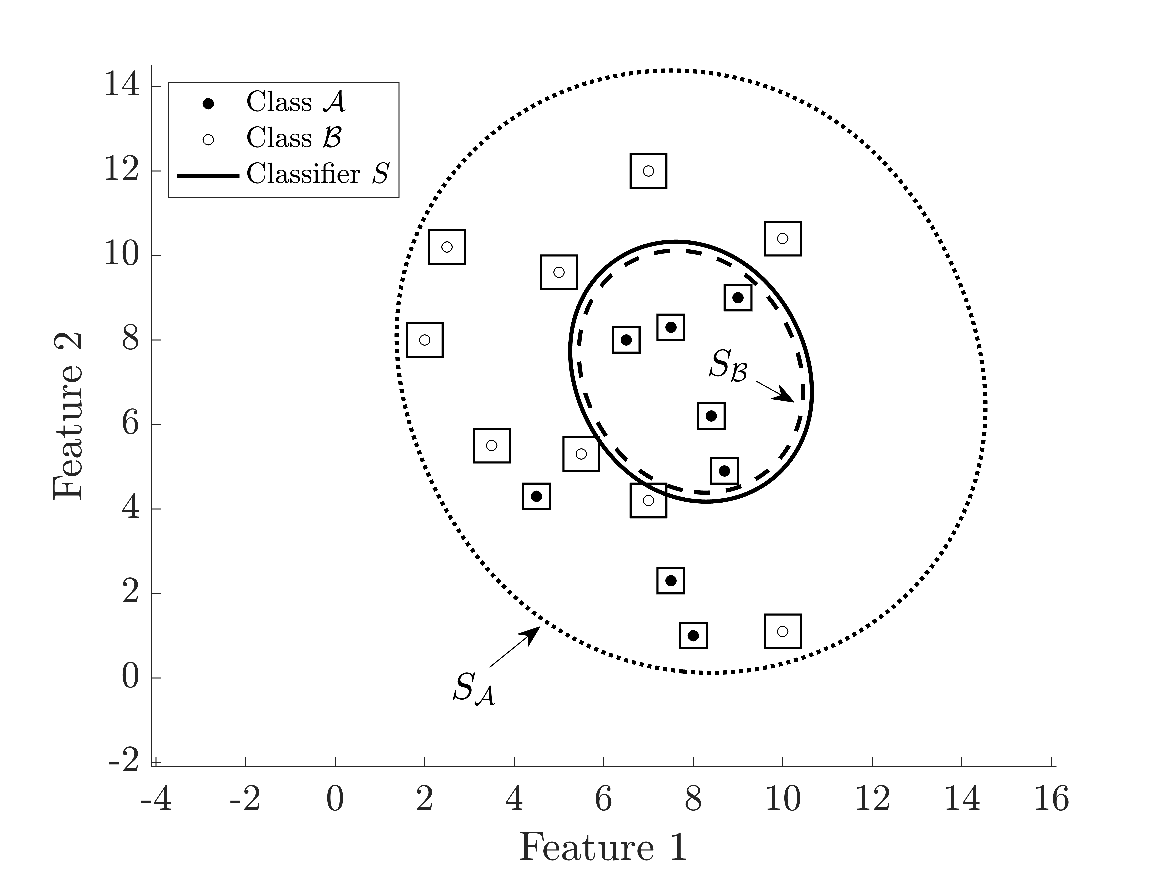}
         \caption{Gaussian RBF kernel ($\alpha=1.9$), $p=\infty$}
         \label{fig_kernel_RBF_rob_pinf}
     \end{subfigure}
          \caption{Separating surfaces obtained with Gaussian RBF kernel function from the robust model \eqref{robust_q1}. The $\ell_p$-norms defining the uncertainty sets are $p=2$ (on the left) and $p=\infty$ (on the right).}
        \label{fig_kernel2_robust}
\end{figure}

\section{Computational results} \label{sec_computational_results}
In this section, we evaluate the performance of the deterministic models presented in Section \ref{sec_novel_deterministic_model} and their robust counterparts of Section \ref{sec_robust_model} on a selection of 12 benchmark datasets taken from the UCI Machine Learning Repository (\cite{KelLonNot2023}). The models were implemented in MATLAB (v. 2021b) and solved using CVX (v. 2.2, see \cite{GraBoy2008,cvx2014}) and MOSEK solver (v. 9.1.9, see \cite{mosek}). All computational experiments were run on a MacBookPro17.1 with a chip Apple M1 of 8 cores and 16 GB of RAM memory. The MATLAB codes developed for the current proposal are made publicly available on GitHub (\url{https://github.com/aspinellibg/NonlinearSVM}).

The benchmark datasets are listed in the first column of Table \ref{tab_comp_res_7525}, along with the corresponding number of observations $m$ and of features $n$. In this study we examine 10 binary classification problems and 2 multiclass classification problems.

The experimental setting is as follows. Each dataset was split into \emph{training set}, composed by the $\beta\%$ of the observations, and \emph{testing set}, composed by the remaining $(100-\beta)\%$. We accounted for three different values of $\beta$, leading to the holdouts 75\%-25\%, 50\%-50\%, and 25\%-75\%. The partition was performed inline with the \emph{proportional random sampling} strategy (\cite{ChenTseYu2001}), meaning that the original class balance in the entire dataset was maintained in both training and testing set. Once the partition was complete, a kernel function $k(\cdot,\cdot)$ was chosen and the training set used to train the deterministic classifier for different values of input parameter $\nu$. Specifically, the deterministic formulation was solved on the training dataset through a grid-search strategy with five logarithmically spaced values of $\nu$ between $10^{-3}$ and $10^0$, and setting $N_{\max}=10^4$ as number of sub-intervals in the linear search procedure (see \cite{FacMagPot2022}). The optimal classifier was chosen among the five candidates as the one minimizing the misclassification error on the training set. Finally, the out-of-sample error on the testing set was computed, as the ratio between the total number of misclassified points in the testing set and its cardinality. In order to get stable results, the partition in training and testing set was performed 96 times in a \emph{repeated holdout} fashion (\cite{Kim2009}). The choice of this number is motivated by the use of the Parallel Computing Toolbox in MATLAB, since the code was parallelized on the 8 cores of the working machine. The final results were then averaged.

As in the original work of \cite{LiuPot2009} and in the robust linear extension presented in \cite{FacMagPot2022}, we considered $q=1$ in the objective function of the models. This choice provides a good compromise between structural risk minimization, related to the misclassification error, and parsimony since it automatically performs feature selection (\cite{LabMar-MerRod-Chi2019,LopMalCar2019,LeeYoonWon2022,LiaoDaiKuo2024}).

As far as it concerns the kernel function $k(\cdot,\cdot)$, we tested seven different alternatives: homogeneous linear ($d=1$, $c=0$), homogeneous quadratic ($d=2$, $c=0$), homogeneous cubic ($d=3$, $c=0$); inhomogeneous linear, inhomogeneous quadratic, inhomogeneous cubic; Gaussian RBF. For simplicity, parameter $\alpha$ in the Gaussian RBF kernel was set as the maximum value of the standard deviation across features for the dataset under consideration. Similarly for parameter $c$ in the inhomogeneous polynomial kernels.

Since models \eqref{GSVM}, \eqref{GSVM_multiclass} and their robust extensions \eqref{rob_GSVM}, \eqref{GSVM_robust_multiclass} are distance-based, imbalances in the order of magnitude of the features may result in distorted weights when classifying. For this reason, we considered \emph{min-max normalization} and \emph{standardization} as pre-processing techniques of data transformation (\cite{HanKamPei2011}). On one hand, in the min-max normalization each dataset was linearly scaled feature-wise into the $n$-dimensional hypercube $[0,1]^n$. On the other hand, in the standardization the values of a specific feature $j$, with $j=1,\ldots,n$, were normalized based on its mean $\mu_j$ and standard deviation $std_j$.

Among all the optimal deterministic classifiers found for each pair \textit{data transformation-kernel function}, the best configuration was chosen as the one minimizing the overall out-of-sample testing error. Within this choice of \textit{data transformation-kernel function}, the robust model was solved. The bounds $\eta^{(i)}$ on the perturbation vectors defining the uncertainty sets $\mathcal{U}_{p}(x^{(i)})$ were adjusted as:
\begin{linenomath}
\begin{eqnarray*}
\eta^{(i)}=\eta_{\mathcal{A}} := \rho_{\mathcal{A}} \max_{j=1,\ldots,n} std_{j,\mathcal{A}} \qquad \forall i:x^{(i)}\in\mathcal{A}
\\
\eta^{(i)}=\eta_{\mathcal{B}} := \rho_{\mathcal{B}} \max_{j=1,\ldots,n} std_{j,\mathcal{B}} \qquad \forall i:x^{(i)}\in\mathcal{B},
\end{eqnarray*}
\end{linenomath}
where $\rho_{\mathcal{A}}$ is a nonnegative parameter allowing the user to tailor the degree of conservatism and $\max_{j=1,\ldots,n} std_{j,\mathcal{A}}$ is the maximum standard deviation feature-wise for training points of class $\mathcal{A}$. Similarly for $\rho_{\mathcal{B}}$ and $\max_{j=1,\ldots,n} std_{j,\mathcal{B}}$. Once $\eta^{(i)}$ had been determined, the computation of the bound $\delta^{(i)}$ in the feature space was performed according to Propositions \ref{lemma_inhom_pol}-\ref{lemma_RBF_gaussian}. For simplicity, we set $\rho_{\mathcal{A}}=\rho_{\mathcal{B}}=\rho$, and considered 7 logarithmically spaced values for $\rho$ between $10^{-7}$ and $10^{-1}$. When the number of classes is greater than two, an analogous approach was applied class-wise. As in the deterministic setting, we averaged the out-of-sample testing errors for 96 random partitions of the dataset.

For each dataset, we report in Table \ref{tab_comp_res_7525} the best configuration \textit{data transformation-kernel function}, along with the average out-of-sample testing errors and standard deviations for the deterministic and robust models (holdout 75\% training set-25\% testing set). We considered three  types of uncertainty set, defined respectively by $\ell_1$-, $\ell_2$- and $\ell_{\infty}$-norm. Detailed results are reported in Tables \ref{tab_detailed_determ_7525}-\ref{tab_detailed_rob_2575_multiclass} in \ref{appendix_results}.

\begin{table}[h!]
\centering
\resizebox{\textwidth}{!}{
\begin{tabular}{llll *{3}l c}\toprule
Dataset & Data transformation & Kernel & Deterministic & \multicolumn{3}{c}{Robust} & Robust\\
$m\times n$& & & & $p=1$ & $p=2$ & $p=\infty$ & improvement ratio\\ \toprule
Arrhythmia  & $-$ & Gaussian RBF & $20.47\% \pm 0.07$ & $\underline{\mathbf{19.12\% \pm 0.08}}$ & $19.30\% \pm 0.07$ & $19.61\% \pm 0.07$ & {$6.60\%$}
\\
$68 \times 279$
\\
\hline
CPU time (s) & & & 0.289 & 0.290 & 0.288 & 0.295
\\
\toprule 
Parkinson & Min-max normalization & Hom. linear & $13.19\% \pm 0.03$ & $12.98\% \pm 0.03$ & $\underline{\mathbf{12.37\% \pm 0.03}}$ & $12.61\% \pm 0.04$ & {$6.22\%$}
\\
$195 \times 22$
\\
\hline
CPU time (s) & & & 3.626 & 3.421 & 3.454 & 3.418
\\
\toprule 
Heart Disease & Standardization & Inhom. linear & $17.48\% \pm 0.04$ & $16.84\% \pm 0.04$ & $17.53\% \pm 0.03$ & $\underline{\mathbf{16.36\% \pm 0.04}}$ & {$6.41\%$}
\\
$297 \times 13$
\\
\hline
CPU time (s) & & & 12.253 & 11.602 & 11.477 & 11.417
\\
\toprule 
Dermatology & $-$ & Inhom. quadratic & $1.64\% \pm 0.02$ & $1.65\% \pm 0.01$ & $1.57\% \pm 0.01$ & $\underline{\mathbf{0.55\% \pm 0.01}}$ & {$66.46\%$}
\\
$358 \times 34$
\\
\hline
CPU time (s) & & & 20.173 & 20.055 & 20.420 & 20.147
\\
\toprule 
Climate Model Crashes & $-$ & Hom. linear & $5.01\% \pm 0.02$ & $4.47\% \pm 0.02$ & $4.50\% \pm 0.01$ & $\underline{\mathbf{4.34\% \pm 0.01}}$ & {$13.37\%$}
\\
$540 \times 18$
\\
\hline
CPU time (s) & & & 68.069 & 66.762 & 67.169 & 67.381
\\
\toprule 
Breast Cancer Diagnostic & Min-max normalization & Inhom. quadratic & $3.02\% \pm 0.02$ & $2.63\% \pm 0.01$ & $2.65\% \pm 0.01$ & $\underline{\mathbf{{2.39\% \pm 0.01}}}$ & {$20.86\%$}
\\
$569 \times 30$
\\
\hline
CPU time (s) & & & 77.786 & 77.968 & 78.267 & 77.543
\\
\toprule 
Breast Cancer & Standardization & Hom. linear & $3.17\% \pm 0.01$ & $\underline{\mathbf{2.97\% \pm 0.01}}$ & $3.07\% \pm 0.01$ & $3.06\% \pm 0.01$ & {$6.31\%$}
\\
$683 \times 9$
\\
\hline
CPU time (s) & & & 135.765 & 135.651 & 137.039 & 136.286
\\
\toprule 
Blood Transfusion & Standardization & Inhom. cubic & $20.72\% \pm 0.02$ & $20.60\% \pm 0.02$ & $\underline{\mathbf{20.55\% \pm 0.02}}$ & $20.64\% \pm 0.02$ & {$0.82\%$}
\\
$748 \times 4$
\\
\hline
CPU time (s) & & & 178.136 & 178.751 & 179.682 & 180.083
\\
\toprule 
Mammographic Mass & Standardization & Inhom. quadratic & $15.71\% \pm 0.02$ & $15.49\% \pm 0.02$ & $\underline{\mathbf{15.42\% \pm 0.02}}$ & $15.54\% \pm 0.02$ & {$1.85\%$}
\\
$830 \times 5$
\\
\hline
CPU time (s) & & & 241.205 & 241.810 & 242.614 & 241.929
\\
\toprule 
{Qsar Biodegradation} & {Min-max normalization} & {Gaussian RBF} & {$12.88\% \pm 0.02$} & {$\underline{\mathbf{11.78\% \pm 0.01}}$} & {$12.72\% \pm 0.02$} & {$12.86\% \pm 0.02$} & {$8.54\%$}
\\
{$1055 \times 41$}
\\
\hline
{CPU time (s)} & & & {484.908} & {498.235} & {495.073} & {491.748}
\\
\toprule[1.8pt] 
{Iris} & {$-$} & {Gaussian RBF} & {$3.10\% \pm 0.03$} & {$3.07\% \pm 0.03$} & {$3.21\% \pm 0.03$} & {$\underline{\mathbf{2.87\% \pm 0.03}}$} & {$7.42\%$}
\\
{$150 \times 4$ (3 classes)} 
\\
\hline
{CPU time (s)} & & & {5.391} & {5.684} & {5.627} & {5.604}
\\
\toprule 
{Wine} & {Standardization} & {Inhom. linear} & {$2.77\% \pm 0.02$} & {$2.63\% \pm 0.02$} & {$2.63\% \pm 0.02$} & {$\underline{\mathbf{2.51\% \pm 0.02}}$} & {$9.39\%$}
\\
{$178 \times 13$ (3 classes)}
\\
\hline
CPU time (s) & & & {7.916} & {8.361} & {8.352} & {8.605}
\\
\bottomrule
\end{tabular}
}
\caption{Average out-of-sample testing errors and standard deviations over 96 runs for the deterministic and robust models. Best results are highlighted. The last column displays the robust improvement ratios over the deterministic counterparts. Holdout: 75\% training set-25\% testing set.} \label{tab_comp_res_7525}
\end{table}

We notice that all the considered robust formulations outperform the corresponding deterministic models. In 6 out of 12 datasets the best results are achieved by the box robust formulation ($p=\infty$). Since box uncertainty sets are the widest around data points, this implies that the proposed formulations benefit from a more conservative approach when treating uncertainties. The last column of Table \ref{tab_comp_res_7525} shows the robust \textit{Improvement Ratio} (IR) over the deterministic counterpart. The IR was computed as in \cite{FacMagPot2022} and according to the following formula:
\begin{equation*}
\text{IR}:=\frac{\tau^{\text{det}}-\tau^{\text{rob*}}}{\tau^{\text{det}}},
\end{equation*}
where $\tau^{\text{det}}$ and $\tau^{\text{rob*}}$ are the average out-of-sample testing errors of the deterministic and the best robust performing model, respectively. The results on the IR further confirm that robust methods provide superior accuracy when the uncertainty is handled in the classification process. Extensive results on the improvement ratio are reported in Table \ref{tab_impr_ratio} in \ref{appendix_results}.

For the sake of completeness, we explore in details the performance of the proposed models when classifying datasets ``Parkinson'' and ``Breast Cancer Diagnostic''. First of all, we discuss the results of the deterministic approach, with respect to both data transformation and kernel function. The out-of-sample testing errors for the holdout 75\%-25\% are depicted in Figure \ref{fig_parkbreast_datatransformation_kernel}, while detailed results are reported in Table \ref{tab_detailed_determ_7525} in \ref{appendix_results}. We note that the worst performance occurs when no data transformations are applied. Conversely, min-max normalization and standardization provide good and comparable results. Similar conclusions can be drawn for holdouts 50\%-50\% and 25\%-75\% (see Tables \ref{tab_detailed_determ_5050}-\ref{tab_detailed_determ_2575} in \ref{appendix_results}).

\begin{figure}[h!]
     \centering
     \begin{subfigure}[b]{0.496\textwidth}
         \centering
         \includegraphics[width=\textwidth]{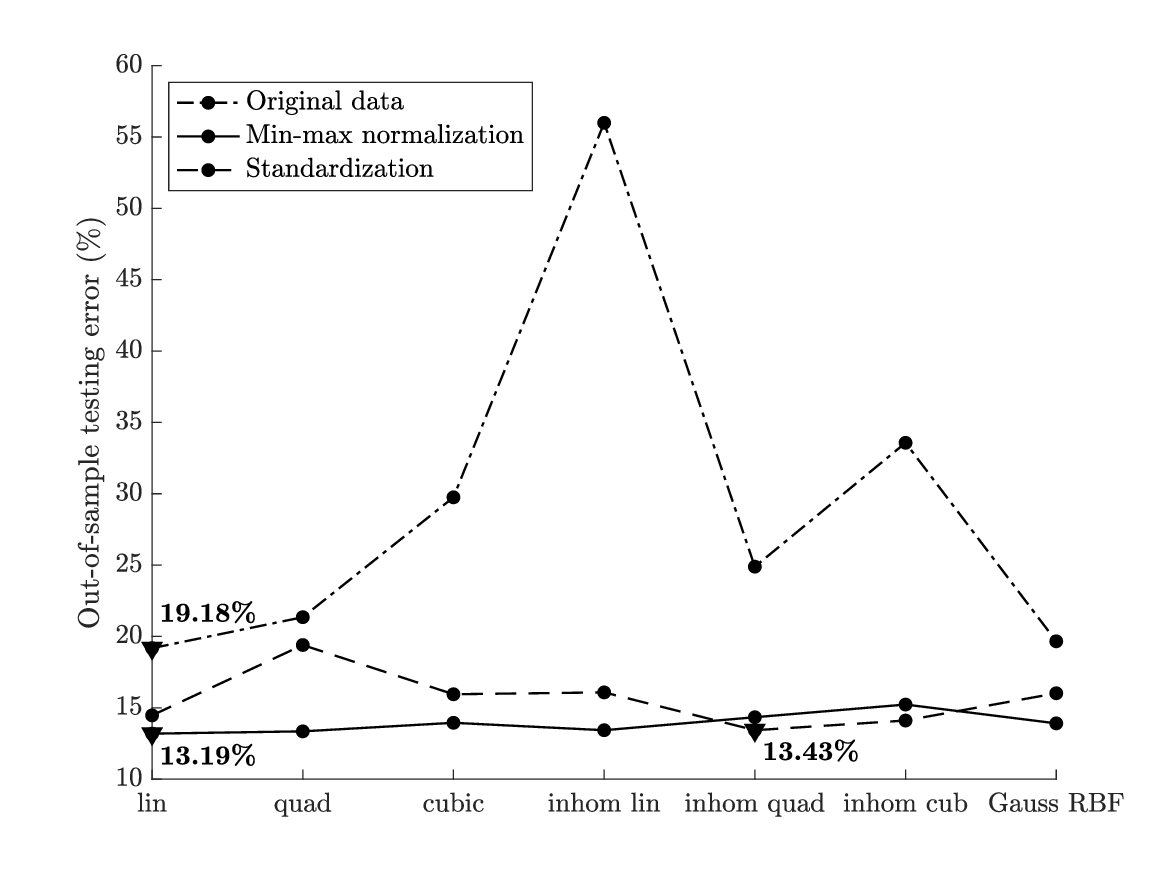}
         \caption{Parkinson.}
         \label{fig_parkinson_determ}
     \end{subfigure}
     \hfill
     \begin{subfigure}[b]{0.496\textwidth}
         \centering
         \includegraphics[width=\textwidth]{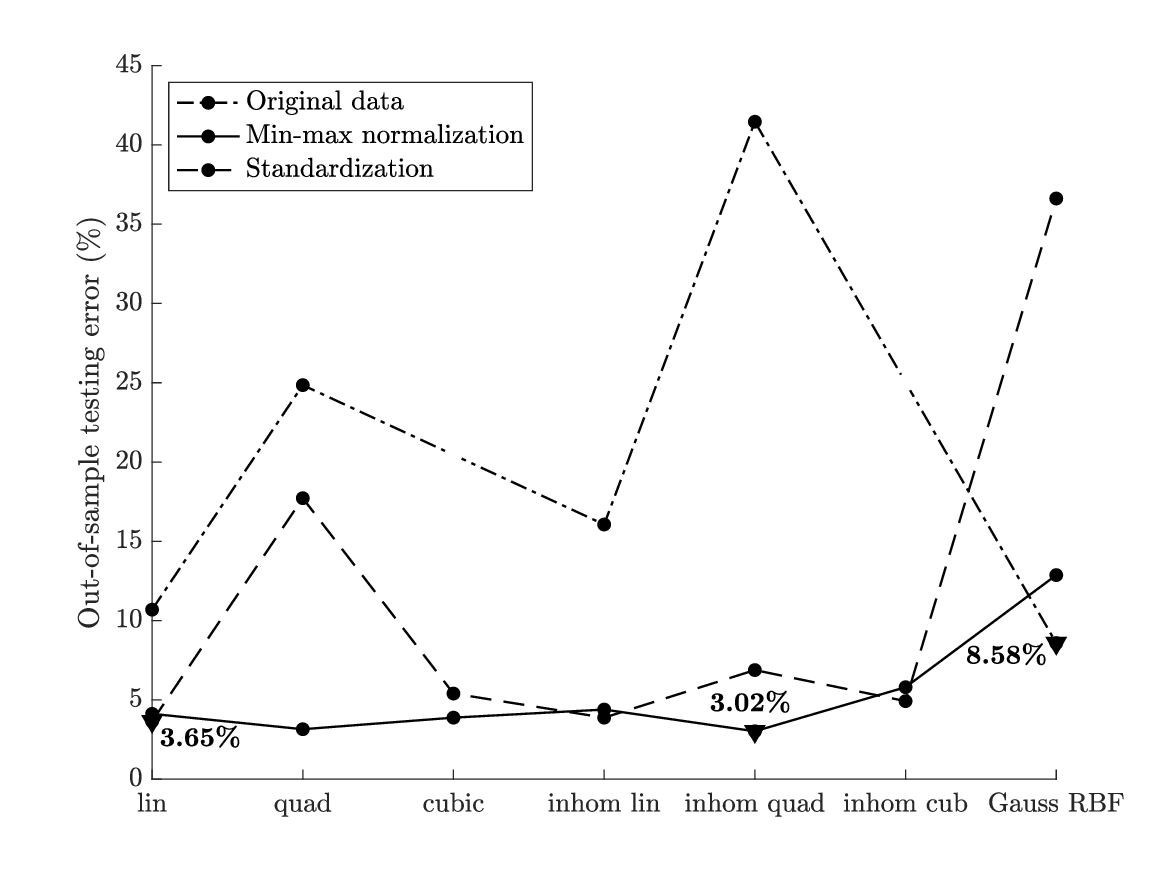}
         \caption{Breast Cancer Diagnostic.}
         \label{fig_breastcancerdiagn_determ}
     \end{subfigure}
          \caption{Out-of-sample testing error of the deterministic formulation applied to the datasets ``Parkinson'' and ``Breast Cancer Diagnostic''. Each triangle represents the lowest error for the corresponding data transformation technique. Holdout: 75\% training set-25\% testing set.}
        \label{fig_parkbreast_datatransformation_kernel}
\end{figure}

In order to evaluate the performance of the robust model, we consider 60 logarithmically spaced values of $\rho$ between $10^{-7}$ and $10^{-1}$. The results are depicted in Figure \ref{fig_parkinson_robust}. We notice that increasing the value of $\beta$ leads to better performance in terms of the overall out-of-sample testing error (see Figures \ref{fig_parkinson_robust_all}, \ref{fig_breastcancerdiagnostic_robust_all}), since more data points in the training set are available as input of the optimization model. In addition, when perturbations are included in the model, the performance improves with respect to the deterministic case. Indeed, the great majority of the points lies below the corresponding horizontal line, representing the out-of-sample testing error of the deterministic classifier. Interestingly, the increase of the uncertainty impacts differently on the two classes (see Figures \ref{fig_parkinson_robust_A_B}, \ref{fig_breastcancerdiagnostic_robust_A_B}). For instance, the ``Breast Cancer Diagnostic'' dataset is not able to bear high levels of uncertainty ($\rho>10^{-3}$) since all data points of class $\mathcal{A}$, representing patients with a malignant tumor, are misclassified. On the other hand, all observations in class $\mathcal{B}$ (patients with a benign tumor) are assigned to the correct category. From a practical perspective, given that classifying people with a malignant tumor as people with a benign tumor is worse than the opposite, robust models with low degree of perturbation should be considered in this case.

\begin{figure}[h!]
     \centering
     \begin{subfigure}[b]{0.496\textwidth}
         \centering
         \includegraphics[width=\textwidth]{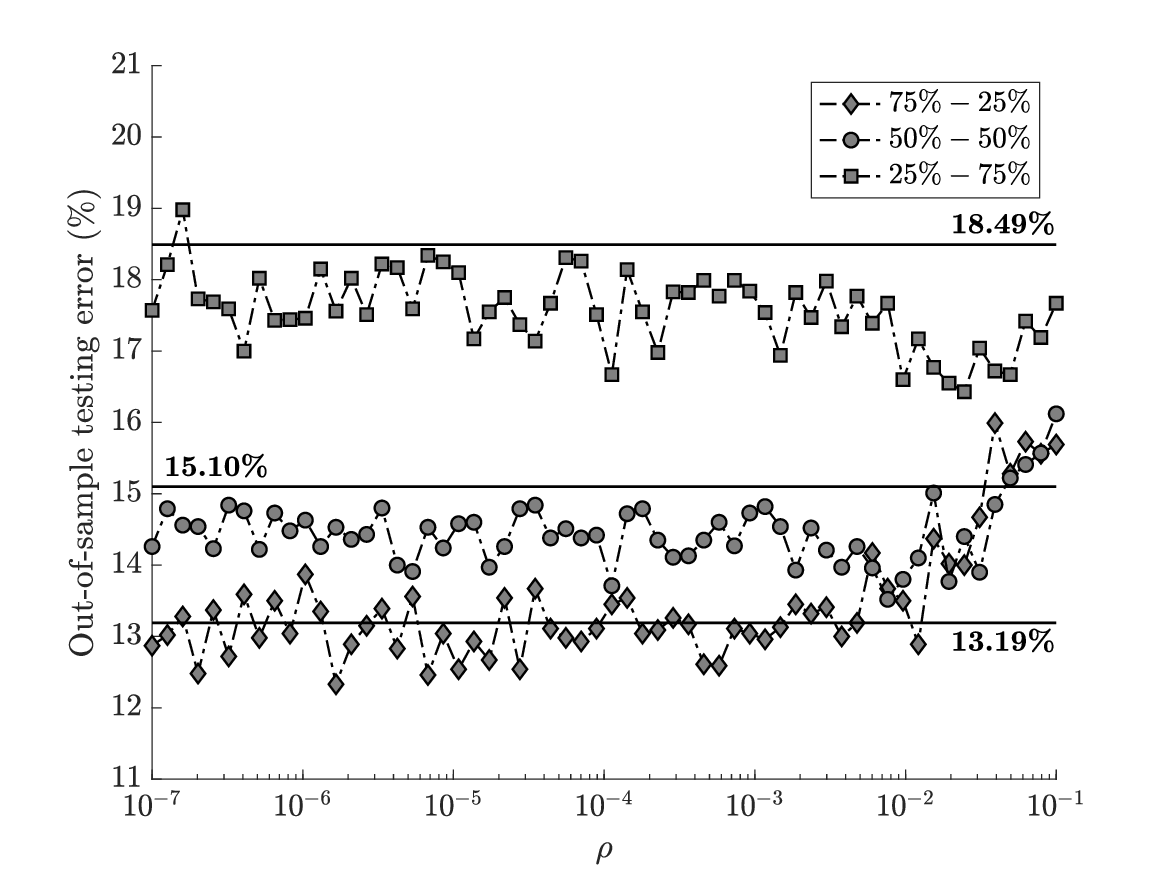}
         \caption{Overall results (Parkinson).}
         \label{fig_parkinson_robust_all}
     \end{subfigure}
     \hfill
     \begin{subfigure}[b]{0.496\textwidth}
         \centering
         \includegraphics[width=\textwidth]{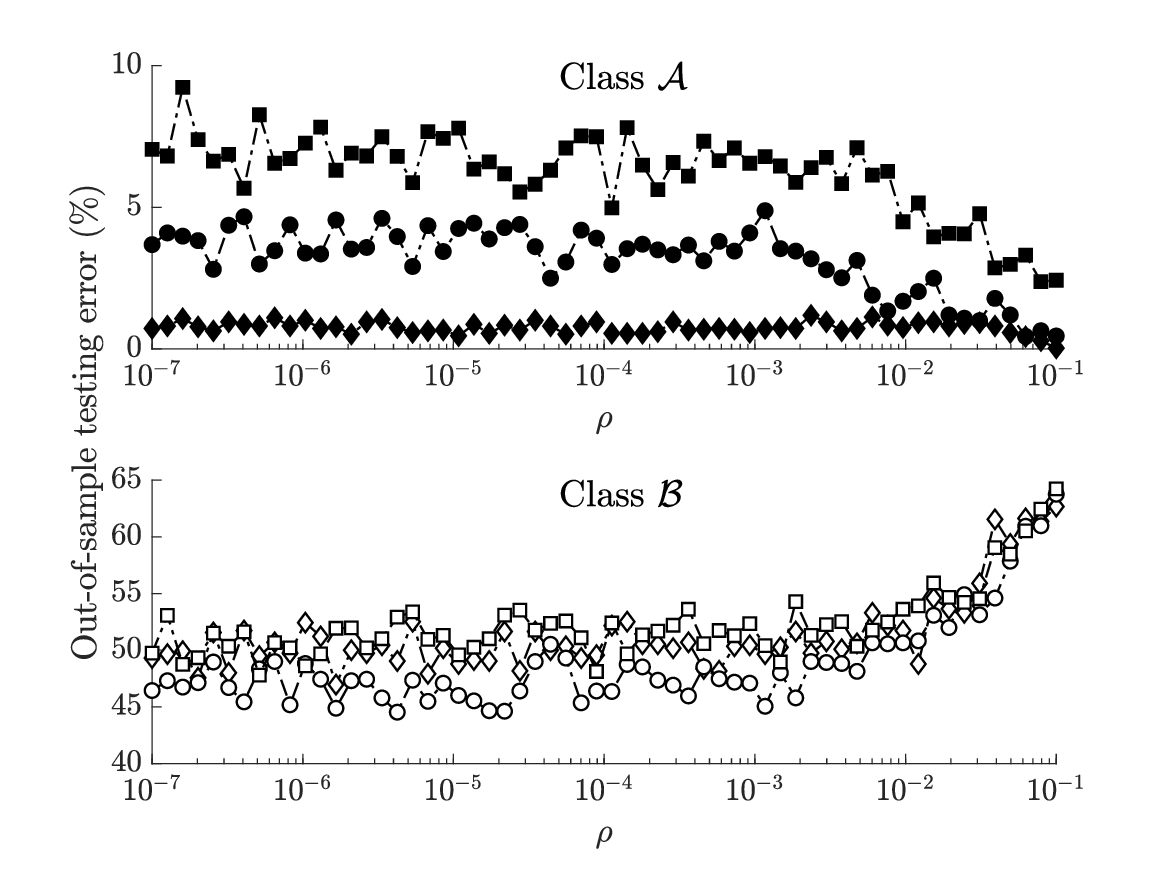}
         \caption{Results divided by class. (Parkinson).}
         \label{fig_parkinson_robust_A_B}
     \end{subfigure}
          \begin{subfigure}[b]{0.496\textwidth}
         \centering
         \includegraphics[width=\textwidth]{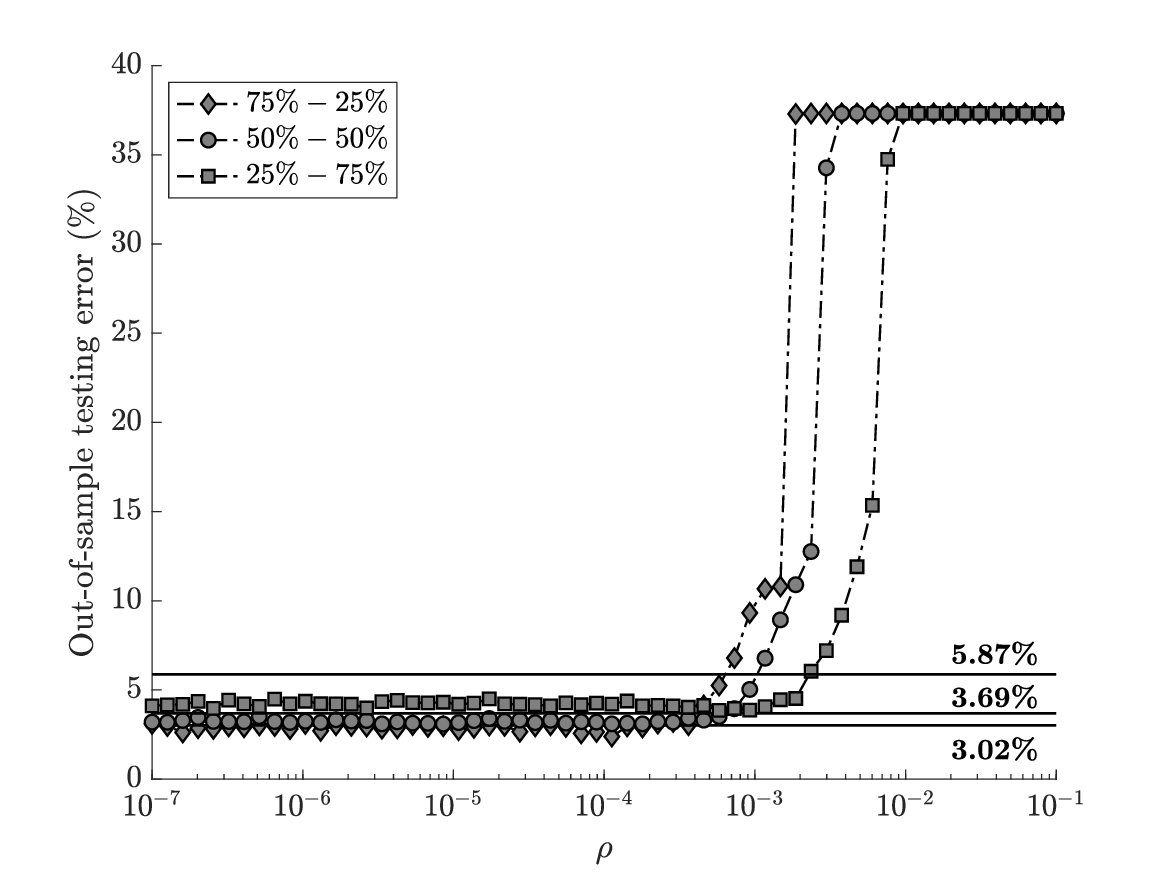}
         \caption{Overall results. (Breast Cancer Diagnostic).}
         \label{fig_breastcancerdiagnostic_robust_all}
     \end{subfigure}
     \hfill
     \begin{subfigure}[b]{0.496\textwidth}
         \centering
         \includegraphics[width=\textwidth]{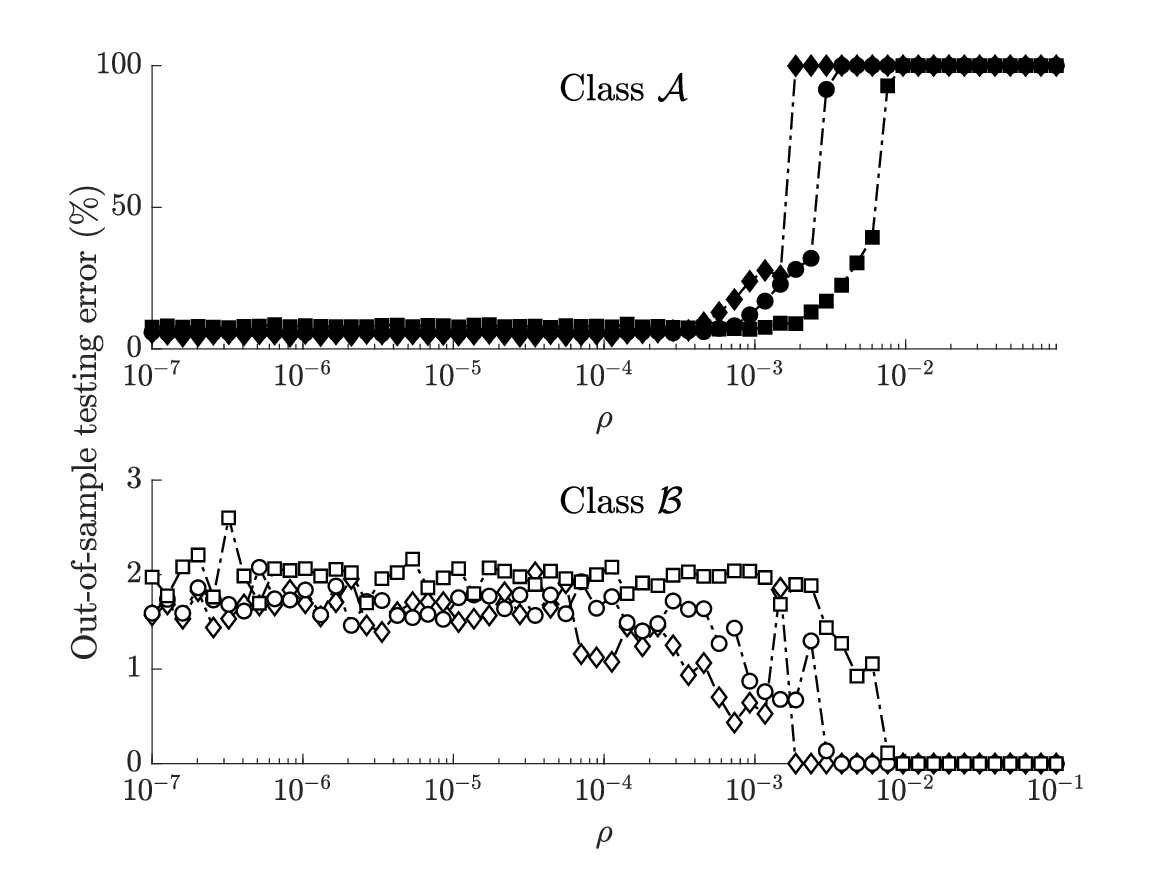}
         \caption{Results divided by class. (Breast Cancer Diagnostic).}
         \label{fig_breastcancerdiagnostic_robust_A_B}
     \end{subfigure}
          \caption{Out-of-sample testing error of the robust formulation applied to the datasets ``Parkinson'' and ``Breast Cancer Diagnostic''. Overall results are on the left, with the performance of the deterministic classifier depicted as horizontal line for each holdout. Results divided by class are on the right. The values of $\rho$ are in logarithmic scale.}
        \label{fig_parkinson_robust}
\end{figure}

In Table \ref{tab_comparison_scikitlearng} we report a comparison between the best results of Table \ref{tab_comp_res_7525} and the out-of-sample testing errors provided by the SVM classifier of \emph{scikit-learn}, a popular ML library implemented in Python (\cite{scikitlearn}). We tested the seven different kernels and reported in column 5 the best choice in terms of the lowest out-of-sample testing error. From column 6, it can be noted that in 8 out of 10 datasets the formulation proposed in this study outperforms the one implemented in the scikit-learn library for SVM.

\begin{table}[h!]
\centering
\resizebox{\textwidth}{!}{
\begin{tabular}{ll| *{2}l | *{2}l}\toprule
{Dataset} & {Data transformation} & \multicolumn{2}{c|}{{Table \ref{tab_comp_res_7525}}} & \multicolumn{2}{c}{{Scikit-learn SVM library}}
\\
& & {Best kernel} & {Result} & {Best kernel} & {Result}
\\
\hline
{Arrhythmia} & {$-$} & {Gaussian RBF} & {$\underline{\mathbf{19.12\% \pm 0.08}}$} & {Gaussian RBF} & {$19.48\% \pm 0.07$}
\\
{Parkinson} & {Min-max normalization} & {Hom. linear} & {$12.37\% \pm 0.03$} & {Inhom. cubic} & {$\underline{\mathbf{9.41\% \pm 0.04}}$}
\\
{Heart Disease} & {Standardization} & {Inhom. linear} & {$\underline{\mathbf{16.36\% \pm 0.04}}$} & {Inhom. linear} & {$16.63\% \pm 0.04$}
\\
{Dermatology} & {$-$} & {Inhom. quadratic} & {$0.55\% \pm 0.01$} & {Inhom. linear} & {$\underline{\mathbf{0.11\% \pm 0.01}}$}
\\
{Climate Model Crashes} & {$-$} & {Hom. linear} & {$\underline{\mathbf{4.34\% \pm 0.01}}$} & {Inhom. linear} & {$4.78\% \pm 0.01$}
\\
{Breast Cancer Diagnostic} & {Min-max normalization} & {Inhom. quadratic} & {$\underline{\mathbf{2.39\% \pm 0.01}}$} & {Hom. cubic} & {$2.78\% \pm 0.01$}
\\
{Breast Cancer} & {Standardization} & Hom. linear & {$\underline{\mathbf{2.97\% \pm 0.01}}$} & {Gaussian RBF} & {$3.04\% \pm 0.01$}
\\
{Blood Transfusion} & {Standardization} & {Inhom. cubic} & {$\underline{\mathbf{20.55\% \pm 0.02}}$} & {Inhom. cubic} & {$21.65\% \pm 0.02$}
\\
{Mammographic Mass}  & {Standardization} & {Inhom. quadratic} & {$\underline{\mathbf{15.42\% \pm 0.02}}$} & {Inhom. quadratic} & {$16.05\% \pm 0.02$}
\\
{Qsar Biodegradation} & {Min-max normalization} & {Gaussian RBF} & {$\underline{\mathbf{11.78\% \pm 0.01}}$} & {Inhom. quadratic} & {$12.57\% \pm 0.02$}
\\
\bottomrule
\end{tabular}}
\caption{Out-of-sample testing error comparison among best results of Table \ref{tab_comp_res_7525} and simulations from the scikit-learn SVM library (\cite{scikitlearn}). The lowest out-of-sample testing error within a dataset is highlighted.} \label{tab_comparison_scikitlearng}
\end{table}

In addition, we compare the performance of our proposal with results from other SVM formulations present in the ML literature (see Table \ref{tab_comparison_FacciniBertsimas}). Specifically, as deterministic models we consider the linear classifiers proposed in \cite{LiuPot2009}, \cite{BerDunPawZhu2019}, and \cite{JayKheCha2007}, as well as the kernelized TWSVM classifier from \cite{JayKheCha2007}. For all of these models, we tuned the hyperparameter in the objective function using the same grid-search strategy employed in this paper. Following \cite{Peng2011}, to prevent issues related to ill-conditioning, we included a regularization term in the objective function of the kernelized TWSVM approach (see \cite{Suman2018} for further details on the MATLAB implementation). Finally, our robust formulation was compared with the robust classifiers from \cite{FacMagPot2022} and \cite{BerDunPawZhu2019}. As shown in Table \ref{tab_comparison_FacciniBertsimas_DET}, in 5 out of 10 datasets the results of our deterministic classifiers outperform the other methods. Consequently, the linear approaches benefit from a generalization towards nonlinear classifier. Table \ref{tab_comparison_FacciniBertsimas_ROB} further shows that our robust formulation achieves even better accuracy in most of the cases.

\begin{table}[h!]
  \begin{subtable}[h]{\textwidth}
\centering
\resizebox{\textwidth}{!}{
\begin{tabular}{l| *{5}l }\toprule
Dataset & \multicolumn{5}{c}{SVM classifier}
\\
& This paper & Linear & Linear & Linear TWSVM & Kernelized TWSVM
\\
& & \cite{LiuPot2009} & \cite{BerDunPawZhu2019} & \cite{JayKheCha2007} & \cite{JayKheCha2007}
\\
\hline
Arrhythmia & $20.47\%$ & $25.65\%$ & $43.08\%$ & $\underline{20.34\%}$ & $24.33\%$
\\
Parkinson & $\underline{13.19\%}$ & $14.13\%$ & $14.36\%$ & $16.10\%$ & $15.71\%$
\\
Heart Disease & $17.48\%$ & $16.68\%$ & $\underline{15.93\%}$ & $16.96\%$ & $16.31\%$
\\
Dermatology & $1.64\%$ & $0.56\%$ & $3.38\%$ & $1.12\%$ & $\underline{0.18\%}$
\\
Climate Model Crashes & $5.01\%$ & $\underline{4.99\%}$ & $5.00\%$ & $13.67\%$ & $5.92\%$
\\
Breast Cancer Diagnostic & $\underline{3.02\%}$ & $4.89\%$ & $6.49\%$ & $3.62\%$ & $4.50\%$
\\
Breast Cancer & $\underline{3.17\%}$ & $3.49\%$ & $5.00\%$ & $4.08\%$ & $4.00\%$
\\
Blood Transfusion & $\underline{20.72\%}$ & $23.49\%$ & $23.62\%$ & $37.12\%$ & $23.22\%$
\\
Mammographic Mass & $\underline{15.71\%}$ & $-$ & $18.07\%$ & $17.32\%$ & $17.92\%$
\\
Qsar Biodegradation & $12.88\%$ & $-$ & $\underline{12.51\%}$ & $14.69\%$ & $13.24\%$
\\
\bottomrule
\end{tabular}
}
\caption{Deterministic formulations.}
\label{tab_comparison_FacciniBertsimas_DET}
\end{subtable}
\vfill \vspace*{0.3cm}
    \begin{subtable}[h]{\textwidth}
        \centering
        \resizebox{0.65\textwidth}{!}{
        \begin{tabular}{l| *{3}l}\toprule
Dataset & \multicolumn{3}{c}{SVM classifier}
\\
& This paper & Robust linear & Robust linear
\\
& & \cite{FacMagPot2022} & \cite{BerDunPawZhu2019}
\\
\hline
Arrhythmia & $\underline{19.12\%}$ & $23.00\%$ & $29.23\%$
\\
Parkinson & $\underline{12.37\%}$ & $13.00\%$ & $16.41\%$
\\
Heart Disease  & $16.36\%$ & $\underline{16.20\%}$ & $16.61\%$
\\
Dermatology & $0.55\%$ & $\underline{0.13\%}$ & $1.13\%$
\\
Climate Model Crashes  & $4.34\%$ & $4.34\%$ & $\underline{4.07\%}$
\\
Breast Cancer Diagnostic & $\underline{2.39\%}$ & $3.89\%$ & $4.04\%$
\\
Breast Cancer & $\underline{2.97\%}$ & $3.12\%$ & $4.26\%$
\\
Blood Transfusion & $\underline{20.55\%}$ & $22.55\% $ & $23.62\%$
\\
Mammographic Mass & $\underline{15.42\%}$ & $-$ & $19.28\%$
\\
{Qsar Biodegradation} &  $\underline{11.78\%}$ & $-$ & $12.42\%$
\\
\bottomrule
\end{tabular}
}
        \caption{Robust formulations.}
\label{tab_comparison_FacciniBertsimas_ROB}
\end{subtable}
   
\caption{Out-of-sample testing error comparison among deterministic and robust results obtained from SVM formulations in the literature. For each approach and dataset, the best result is underlined.} \label{tab_comparison_FacciniBertsimas}
\end{table}

To assess the good performance of the proposed approach over the other methods, we applied the Friedman test and the Holm test (\cite{Dem2006}). First of all, we computed the average rank $R_j$ for each of the methods on the basis of the out-of-sample testing error (see columns 2 and 4 in Table \ref{tab_holm_test}). Then, the Friedman test with Iman-Davenport correction is applied to verify whether such ranks are statistically similar (null hypothesis). The statistic $F_F$ associated with the test is given by:
\begin{linenomath}
\begin{equation*}
F_F=\frac{(N_d-1)\chi^2_F}{N_d(N_m-1)-\chi^2_F}, \quad \text{with} \quad \chi^2_F = \frac{12N_d}{N_m(N_m+1)}\bigg[\sum_{j=1}^{N_m} R_j^2-\frac{N_m(N_m+1)^2}{4}\bigg],
\end{equation*}
\end{linenomath}
where $N_d=8$ is the number of datasets (we excluded ``Mammographic Mass'' and ``Qsar Biodegradation'' since they were not considered in \cite{FacMagPot2022}) and $N_m$ is the number of methodologies (5 for the deterministic and 3 for the robust). Under the null hypothesis, $F_F$ is distributed according to the $F$-distribution with $N_m-1$ and $(N_m-1)(N_d-1)$ degrees of freedom. In our case, the $p$-values associated with the Friedman test are $0.243$ and $0.014$ for the deterministic and robust approach, respectively. This implies that for the robust classifiers the null hypothesis of equal ranks is rejected with a significance level lower than $\alpha_R=5\%$. Since such hypothesis does not hold, we performed pairwise comparisons between the robust classifier with the highest rank $R^*$ and those remaining. To this extent, we considered the Holm test (\cite{Dem2006}) whose statistic $z_j$ for comparing the best classifier with the $j$-th one is computed as:
\begin{linenomath}
\begin{equation*}
z_j=(R^*-R_j)\sqrt{\frac{6N_d}{N_m(N_m+1)}}.
\end{equation*}
\end{linenomath}
Under the null hypothesis of outperformance of the best method over the others, the test statistic is distributed as a standard normal distribution. The results of the Holm test are presented in Table \ref{tab_holm_test} (see columns 3-7). The null hypothesis is rejected when the $p$-value of the test is below the significance thresholds of column 6. It can be seen that the proposed model achieves the highest rank in both the deterministic and robust formulation, outperforming the robust linear SVM approach presented in \cite{BerDunPawZhu2019}. On the other hand, there are no statistically significant differences between our proposal and the robust method devised in \cite{FacMagPot2022}, even if in most cases the results confirm the good performance of the proposed methodology (see Table \ref{tab_comparison_FacciniBertsimas_ROB}).

\begin{table}[h!]
\centering
\resizebox{\textwidth}{!}{
\begin{tabular}{ll|lllll}\toprule
\multicolumn{2}{c|}{{Deterministic formulation}} & \multicolumn{5}{c}{{Robust formulation}} 
\\
{SVM classifier} & {Mean rank} & {SVM classifier} & {Mean rank} & {$p$-value} & {$\alpha_R/(j-1)$} & {Action}
\\
\hline
{This paper} & {2.250} & {This paper} & {1.438} & {-} & {-} & {-}
\\
{\cite{FacMagPot2022}} & {2.625} & {\cite{FacMagPot2022}} & {1.813} & {0.453} & {0.050} & {Not reject}
\\
{Kernelized TWSVM \cite{JayKheCha2007}} & {2.750} & {\cite{BerDunPawZhu2019}} & {2.750} & {0.009} & {0.025} & {Reject}
\\
{Linear TWSVM \cite{JayKheCha2007}} & {3.625}
\\
{\cite{BerDunPawZhu2019}} & {3.750}
\\
\bottomrule
\end{tabular}
}
\caption{{Mean ranks of the deterministic formulations (columns 1-2). Holm test for pairwise comparison of robust formulations, with $\alpha_R=0.05$ and $j=2,3$ (columns 3-7).}} \label{tab_holm_test}
\end{table}

From Table \ref{tab_comp_res_7525} it can be noticed that the choice of the best data transformation method strongly depends on the dataset. In order to guide the final user among the three possible techniques, we report in Table \ref{tab_data_transformation} in \ref{appendix_results} summary statistics on the 10 datasets deployed for binary classification task. Specifically, for each feature we compute the mean and the corresponding coefficient of variation, defined as the ratio between the standard deviation and the mean. In Table \ref{tab_data_transformation} we list the minimum and the maximum values of the two considered indices for each dataset, along with the corresponding best data transformation. We argue that, whenever the values of the observations are close, the best approach is to classify the original data without any transformation (see datasets \virg{Arrhythmia}, \virg{Dermatology} and \virg{Climate Model Crashes}). In the extreme case of constant features, pre-processing techniques of data transformation cannot be applied (see dataset \virg{Arrhythmia}). On the other hand, the min-max normalization is a suitable choice when the order of magnitude across the features varies a lot. For instance, in datasets \virg{Parkinson} and \virg{Breast Cancer Diagnostic} there are 7 and 5 orders of magnitude of difference between the minimum and the maximum value of the mean of the features, respectively. Finally, standardization is an appropriate technique in all other cases, where no significant differences occur among the orders of magnitude of the features (see datasets \virg{Heart Disease}, \virg{Breast Cancer}, \virg{Blood Transfusion} and \virg{Mammographic Mass}).

Finally, numerical results show that the computational time is significantly high for datasets with a large number of observations, especially when considering 75\% of the instances as training set (see Table \ref{tab_detailed_determ_7525} in \ref{appendix_results}). The performing speed benefits from a reduction of $\beta$, even if at the cost of worsening the accuracy. Nevertheless, when datasets are equally split in training and testing set, the out-of-sample testing error does not increase significantly if compared to the holdout 75\%-25\% (see Table \ref{tab_detailed_determ_5050}). A similar conclusion can be drawn for the robust model (see Tables \ref{tab_detailed_rob_7525}-\ref{tab_detailed_rob_5050_cont}). Conversely, from the time complexity analysis, it should be noticed that the number $N_{\max}$ of sub-intervals chosen to solve problem \eqref{linear_search_GSVM} and its variants impacts on the overall computational time especially when the number of observations is not significantly high. Therefore, the final user should properly choose the values of $\beta$ and $N_{\max}$ to guarantee high accuracy in a reasonable time.

\section{Conclusions} \label{sec_conclusions}
In this paper, we have proposed novel optimization models for solving binary and multiclass classification tasks through a Support Vector Machine (SVM) approach. From a methodological perspective, we have extended the techniques presented in \cite{LiuPot2009,FacMagPot2022} to the nonlinear context through the introduction of kernel functions. Data are mapped from the input space to the feature space where a first classification via kernelized SVM is performed. The optimal classifier is then constructed as the solution of a linear search procedure aiming to minimize the overall misclassification error.

Motivated by the uncertain nature of real-world data, we have adopted a Robust Optimization (RO) approach by constructing around each training data a bounded-by-$\ell_p$-norm uncertainty set, with $p\in[1,\infty]$. Perturbation propagates from the input space to the feature space through the feature map associated with the kernel function. To face this problem, we have rigorously derived closed-form expressions for the uncertainty set bounds in the feature space, extending the results present in the literature. Thanks to this, we have formulated the robust counterpart of the deterministic models in the case of nonlinear classifiers. To enhance generalization, in all the proposed formulations we have considered a $\ell_q$-norm with $q\in[1,\infty]$ as measure of the SVM-margin. Since the resulting robust problem turns to be convex but nonlinear, we have proved that in specific cases it can be reformulated as a LP or a SOCP problem, with clear advantages in terms of computational efficiency.

The proposed models have been tested on real-world datasets, considering different combinations of data transformations and kernel functions. The results show that our robust formulation outperforms other linear and kernelized SVM approaches in most cases. This has been confirmed by classical statistical tests deployed to compare the performance of machine learning techniques. Overall, the models benefit from including uncertainty in the training process. The accuracy is clearly affected by the choice of the kernel function and of the data transformation before training. Therefore, we have provided insights to guide the final user in choosing the best configuration.

Regarding future advancements, various streams of research can originate from this work. First of all, extend the approach to handle uncertainties in the labels of training data. This could increase the generalization capability of the models. Additionally, in this work we have followed the classical RO approach of including uncertainty during the training phase (see, for instance, \cite{BerDunPawZhu2019}). It could be noteworthy to consider perturbations both in the training and in the testing sets. However, this choice increases the complexity of the models and novel measures to quantify the accuracy have to be devised, since it is not obvious how to classify an entire uncertainty set in one class or another as opposed to the case of single data point. The main limitation of the current proposal is the complexity of the two-step procedure, leading to a time-consuming process. Further techniques could be employed to speed up the approach and to optimize the tuning phase of the model's parameters (see, for example, the Bayesian optimization in \cite{SnoLarAda2012}). Finally, different methodologies could be applied to further robustify the models. For instance, Chance-Constrained Programming and Distributionally Robust Optimization with ambiguity sets defined by moments, phi-divergences or Wasserstein distance merit further research too.

\section*{Acknowledgements}
This work has been supported by ``ULTRA OPTYMAL - Urban Logistics and sustainable TRAnsportation: OPtimization under uncertainTY and MAchine Learning'', a PRIN2020 project funded by the Italian University and Research Ministry (grant number 20207C8T9M).

This study was also carried out within the MOST - Sustainable Mobility National Research Center and received funding from the European Union Next-GenerationEU (PIANO NAZIONALE DI RIPRESA E RESILIENZA (PNRR) – MISSIONE 4 COMPONENTE 2, INVESTIMENTO 1.4 – D.D. 1033 17/06/2022, CN00000023), Spoke 5 ``Light Vehicle and Active Mobility''. This manuscript reflects only the authors' views and opinions, neither the European Union nor the European Commission can be considered responsible for them.

\section*{References}

\bibliography{svm_bib}

\newpage
\appendix

\section{Supplementary proofs} \label{appendix_proofs}
We first recall a lemma that will be useful to prove Propositions \ref{lemma_inhom_pol}-\ref{lemma_RBF_gaussian}.

\begin{lemma}[\textbf{Inequalities in $\ell_p$-norm}] \label{lemma_equivalence_norm}
Let $x$ be a vector in $\mathbb{R}^n$. If $1\leq p \leq q \leq \infty$, then:
\begin{linenomath}
\begin{equation} \label{norm_inequality_qpq}
\norm{x}_q \leq \norm{x}_p \leq n^{ \textstyle \frac{1}{p}-\frac{1}{q}}\norm{x}_q.
\end{equation}
\end{linenomath}
\end{lemma}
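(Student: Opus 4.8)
The statement is the classical monotonicity-and-comparison pair for $\ell_p$-norms on $\mathbb{R}^n$, and I would prove the two inequalities separately, treating the finite case $q<\infty$ first and then the limiting case $q=\infty$ via the convention $\frac{1}{\infty}:=0$. The case $x=0$ or $p=q$ is trivial, so I may assume $x\neq 0$ and $p<q$.

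For the left inequality $\norm{x}_q\leq\norm{x}_p$, the plan is to normalize: set $\tilde{x}:=x/\norm{x}_p$, so that $\sum_{i=1}^n \abs{\tilde{x}_i}^p=1$ and in particular $\abs{\tilde{x}_i}\leq 1$ for every $i$. Since $q\geq p$, raising a number in $[0,1]$ to a larger exponent only decreases it, hence $\abs{\tilde{x}_i}^q\leq\abs{\tilde{x}_i}^p$ for all $i$; summing gives $\sum_i\abs{\tilde{x}_i}^q\leq 1$, i.e. $\norm{\tilde{x}}_q\leq 1$, which is exactly $\norm{x}_q\leq\norm{x}_p$ after rescaling. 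For $q=\infty$ the same normalization gives $\norm{\tilde{x}}_\infty=\max_i\abs{\tilde{x}_i}\leq 1$ since each $\abs{\tilde{x}_i}\leq\big(\sum_j\abs{\tilde{x}_j}^p\big)^{1/p}=1$.

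For the right inequality $\norm{x}_p\leq n^{\frac1p-\frac1q}\norm{x}_q$, the tool is H\"older's inequality. When $q<\infty$, write $\sum_{i=1}^n\abs{x_i}^p=\sum_{i=1}^n\abs{x_i}^p\cdot 1$ and apply H\"older with conjugate exponents $\frac{q}{p}$ and $\frac{q}{q-p}$:
\begin{linenomath}
\begin{equation*}
\sum_{i=1}^n\abs{x_i}^p\leq\bigg(\sum_{i=1}^n\big(\abs{x_i}^p\big)^{q/p}\bigg)^{p/q}\bigg(\sum_{i=1}^n 1\bigg)^{(q-p)/q}=\norm{x}_q^{p}\, n^{(q-p)/q}.
\end{equation*}
\end{linenomath}
Taking $p$-th roots yields $\norm{x}_p\leq n^{\frac{q-p}{pq}}\norm{x}_q=n^{\frac1p-\frac1q}\norm{x}_q$. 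For $q=\infty$ one argues directly: $\sum_{i=1}^n\abs{x_i}^p\leq n\,\norm{x}_\infty^p$, so $\norm{x}_p\leq n^{1/p}\norm{x}_\infty$, which matches the claimed bound under $\frac{1}{\infty}:=0$.

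\textbf{Main obstacle.} There is no genuine difficulty here; the only points requiring care are choosing the correct pair of conjugate H\"older exponents $\big(\frac{q}{p},\frac{q}{q-p}\big)$ and checking that the $q=\infty$ endpoint is consistent with the paper's conventions $\frac{1}{\infty}:=0$ and $\norm{a}_\infty^\infty:=\norm{a}_\infty$, so that both displayed inequalities degenerate correctly in that limit.
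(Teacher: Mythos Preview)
Your proposal is correct and follows essentially the same route as the paper: a normalization argument for $\norm{x}_q\leq\norm{x}_p$ (the paper normalizes by $\norm{x}_q$ rather than $\norm{x}_p$, but the mechanism is identical) and H\"older with conjugate exponents $\frac{q}{p},\frac{q}{q-p}$ for the right-hand inequality. Your explicit treatment of the $q=\infty$ endpoint is slightly more careful than the paper's, which leaves that case to the stated conventions.
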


\begin{proof}
We consider the two inequalities separately, starting from $\norm{x}_q \leq \norm{x}_p$. First of all, if $x=0$, then the inequality is obviously true. Otherwise, let $y\in\mathbb{R}^n$ such that $y_i:=\abs{x_i}/\norm{x}_q$ for $i=1,\ldots,n$. Therefore, $0\leq y_i\leq 1$. Indeed:
\begin{linenomath}
\begin{equation*}
\norm{x}_q^q =\sum_{i=1}^n \abs{x_i}^q \geq \abs{x_i}^q,
\end{equation*}
\end{linenomath}

for all $i=1,\ldots,n$ and thus $\abs{x_i}/\norm{x}_q\leq 1$. The hypothesis $p\leq q$ and the decreasing property of the exponential function with basis lower than one imply that:
\begin{linenomath}
\begin{equation*}
y_i^p \geq y_i^q, \qquad i=1,\ldots,n.
\end{equation*}
\end{linenomath}

By summing we have:
\begin{linenomath}
\begin{equation*}
\norm{y}_p \geq \norm{y}_q.
\end{equation*}
\end{linenomath}

Finally, by definition of $y$ we derive that:
\begin{linenomath}
\begin{equation*}
\frac{\norm{x}_p}{\norm{x}_q}\geq \frac{\norm{x}_q}{\norm{x}_q}=1,
\end{equation*}
\end{linenomath}

from which the thesis follows.

On the other hand, to prove the second inequality we recall the H\"older inequality (see, for instance, \cite{Rud1987}). Let $a$ and $b$ be in $\mathbb{R}^n$. If $r$ and $r'$ are conjugate exponents, i.e. $\textstyle \frac{1}{r}+\frac{1}{r'}=1$, with $1\leq r,r'\leq \infty$, then:
\begin{linenomath}
\begin{equation*}
\norm{ab}_1\leq \norm{a}_r \cdot \norm{b}_{r'},
\end{equation*}
\end{linenomath}

or, equivalently:
\begin{linenomath}
\begin{equation}\label{holder_inequality}
\sum_{i=1}^n \abs{a_i}\abs{b_i} \leq \bigg(\sum_{i=1}^n \abs{a_i}^r\bigg)^{\textstyle \frac{1}{r}} \cdot \bigg(\sum_{i=1}^n \abs{b_i}^{r'}\bigg)^{\textstyle \frac{1}{r'}}.
\end{equation}
\end{linenomath}

First of all, we rewrite the $\ell_p$-norm of $x$ as:
\begin{linenomath}
\begin{equation*}
\norm{x}_p^p=\sum_{i=1}^n \abs{x_i}^p=\sum_{i=1}^n \abs{x_i}^p \cdot 1.
\end{equation*}
\end{linenomath}

In the H\"older inequality \eqref{holder_inequality}, let $a=x$ and $b=e$ and consider as conjugate exponents $r=\textstyle \frac{q}{p}$ and $r'=\textstyle \frac{q}{q-p}$. Both $r$ and $r'$ are greater than or equal to 1 because, by hypothesis, $p\leq q$. Consequently, we can bound the $\ell_p$-norm of $x$ by:
\begin{linenomath}
\begin{equation*}
\norm{x}_p^p \leq \bigg(\sum_{i=1}^n \big(\abs{x_i}^p\big)^{\textstyle \frac{q}{p}}\bigg)^{\textstyle \frac{p}{q}} \cdot \bigg(\sum_{i=1}^n 1^{\textstyle \frac{q}{q-p}}\bigg)^{1 - \textstyle \frac{p}{q}}= \bigg(\sum_{i=1}^n \abs{x_i}^q\bigg)^{\textstyle \frac{p}{q}}n^{1-\textstyle \frac{p}{q}}= \norm{x}_q^{p}n^{1-\textstyle \frac{p}{q}}.
\end{equation*}
\end{linenomath}

Finally, the thesis follows by taking the $p$-th root of both sides of the inequality.
\end{proof}

A graphical representation of inequality \eqref{norm_inequality_qpq} is depicted in Figure \ref{fig_norm_inequality}.

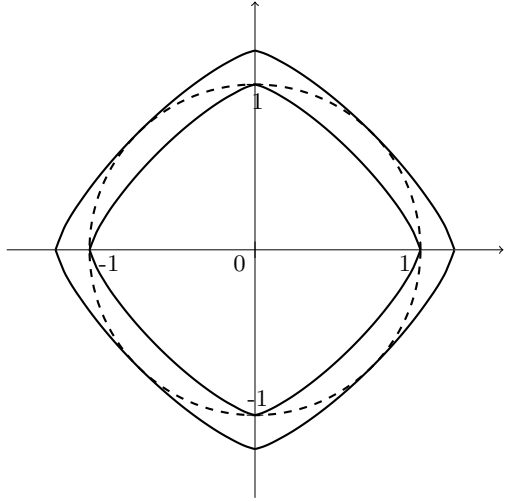
\begin{figure}[h!]
\begin{center}
\begin{tikzpicture}[scale=2.2]
    \draw[very thin,->] (-1.5,0) -- (1.5,0);
    \draw[very thin,->] (0,-1.5) -- (0,1.5);

    \foreach \x in {-1} \draw (\x,0.05) -- (\x,-0.05) node[below=0.5ex,right] {\small\x};
    \foreach \x in {1} \draw (\x,0.05) -- (\x,-0.05) node[below=0.5ex,left] {\small\x};
   \foreach \y in {0} \draw (\y,0.05) -- (\y,-0.05) node[below=0.5ex,left] {\small 0};
    \foreach \y in {-1} \draw (-0.05,\y) -- (0.05,\y) node[left=0.5ex,above] {\small\y};
     \foreach \y in {1} \draw (-0.05,\y) -- (0.05,\y) node[left=0.5ex,below] {\small\y};
    
    \draw[black,dashed,thick] (0,0) circle (1); 
    \draw[domain=0:1.,smooth,variable=\z,thick] plot ({\z},{pow(1-\z^1.3,0.76923)});
	 \draw[domain=0:1.,smooth,variable=\z,thick] plot ({\z},{-pow(1-\z^1.3,0.76923)});
	 \draw[domain=-1:0.,smooth,variable=\z,thick] plot ({\z},{pow(1+\z^1.3,0.76923)});
	  \draw[domain=-1:0.,smooth,variable=\z,thick] plot ({\z},{-pow(1+\z^1.3,0.76923)});
	  \draw[domain=0:1.20516,smooth,variable=\z,thick] plot ({\z},{pow(1.274561-\z^1.3,0.76923)});
	 \draw[domain=0:1.20516,smooth,variable=\z,thick] plot ({\z},{-pow(1.274561-\z^1.3,0.76923)});
	 \draw[domain=-1.20516:0,smooth,variable=\z,thick] plot ({\z},{pow(1.274561+\z^1.3,0.76923)});
	 \draw[domain=-1.20516:0.,smooth,variable=\z,thick] plot ({\z},{-pow(1.274561+\z^1.3,0.76923)});
\end{tikzpicture}
\end{center}
\caption{Graphical representation of Lemma \ref{lemma_equivalence_norm} in the case of $p=1.3$, $q=2$, $n=2$. The dashed $\ell_2$ unit ball lies between the $\ell_{1.3}$ unit ball and the $\ell_{1.3}$ ball with radius $2^{\frac{1}{1.3}-\frac{1}{2}}\approx 1.205$.} \label{fig_norm_inequality}
\end{figure}

As special cases, Lemma \ref{lemma_equivalence_norm} implies that, whenever $1\leq p \leq 2$, then:
\begin{linenomath}
\begin{equation} \label{norm_inequality_p2}
\norm{x}_2 \leq \norm{x}_p.
\end{equation}
\end{linenomath}

Conversely, if $p > 2$, then:
\begin{linenomath}
\begin{equation} \label{norm_inequality_2p}
\norm{x}_2 \leq n^{\textstyle \frac{p-2}{2p}} \norm{x}_p.
\end{equation}
\end{linenomath}

Thus, combining these results, we can write:
\begin{linenomath}
\begin{equation*}
\norm{x}_2 \leq C \norm{x}_p,
\end{equation*}
\end{linenomath}

with:
\begin{linenomath}
\begin{equation}
  C=C(n,p)= \begin{cases}
    1, & 1\leq p \leq 2\\
n^{\textstyle \frac{p-2}{2p}}, & p > 2.
  \end{cases} \label{constant_C}
\end{equation}
\end{linenomath}

\newpage
\noindent\textbf{{Proof of Proposition \ref{lemma_inhom_pol}}}

\begin{proof}
The $\mathcal{H}$-norm of the vector of perturbation $\zeta^{(i)}$ in the feature space can be expanded as:
\begin{linenomath}
	\begin{equation} \label{zeta_norm}
		\begin{split}
			\norm{\zeta^{(i)}}^2_{\mathcal{H}} &= \norm{\phi(x)-\phi(x^{(i)})}^2_\mathcal{H}\\
			&= \norm{\phi(x^{(i)}+\sigma^{(i)})-\phi(x^{(i)})}^2_\mathcal{H}\\
			&=\langle \phi(x^{(i)}+\sigma^{(i)})-\phi(x^{(i)}), \phi(x^{(i)}+\sigma^{(i)})-\phi(x^{(i)}) \rangle\\
			&= \langle \phi(x^{(i)}+\sigma^{(i)}), \phi(x^{(i)}+\sigma^{(i)})\rangle-2 \langle \phi(x^{(i)}+\sigma^{(i)}), \phi(x^{(i)}\rangle  + \langle \phi(x^{(i)}), \phi(x^{(i)}) \rangle\\
			&= k(x^{(i)}+\sigma^{(i)}, x^{(i)}+\sigma^{(i)}) -2k(x^{(i)}+\sigma^{(i)}, x^{(i)})+k(x^{(i)}, x^{(i)}).
		\end{split}
	\end{equation}
\end{linenomath}

By definition of the inhomogeneous polynomial kernel of degree $d$, the last right-hand side of \eqref{zeta_norm} becomes:
\begin{linenomath}
\begin{equation*}
\begin{split}
\norm{\zeta^{(i)}}^2_{\mathcal{H}} &= \bigg(\norm{x^{(i)}+\sigma^{(i)}}^{2}_2+c\bigg)^d -2 \big(\langle x^{(i)}+\sigma^{(i)}, x^{(i)} \rangle +c \big)^d + \bigg(\norm{x^{(i)}}^{2}_2+c\bigg)^d\\
&\!=\! \bigg(\!\norm{x^{(i)}}_2^2\!+\!\norm{\sigma^{(i)}}_2^2\!+\!2\text{ }\langle \sigma^{(i)}, x^{(i)} \rangle+c\bigg)^d \!\!-2 \bigg(\norm{x^{(i)}}_2^2+\langle \sigma^{(i)}, x^{(i)} \rangle +c \bigg)^d \!+\bigg(\norm{x^{(i)}}^{2}_2+c\bigg)^d.
\end{split}
\end{equation*}
\end{linenomath}

By applying the Cauchy-Schwarz inequality in $\mathbb{R}^n$ to the terms containing the dot product, the previous expression simplifies further, leading to:
\begin{linenomath}
\begin{equation*}
\begin{split}
\norm{\zeta^{(i)}}^2_{\mathcal{H}} &\!\!\!\leq \!\!\bigg(\!\!\norm{x^{(i)}}_2^2\!+\!\norm{\sigma^{(i)}}_2^2\!+\!2\norm{\sigma^{(i)}}_2 \norm{x^{(i)}}_2\!+c\bigg)^d\!\!\!\! -2 \bigg(\!\!\norm{x^{(i)}}_2^2+\norm{\sigma^{(i)}}_2\! \norm{x^{(i)}}_2 \!\!+c\!\bigg)^d\!\!\!\! +\!\!\bigg(\!\norm{x^{(i)}}_2^2\!\!+c\!\bigg)^d\\
& = \bigg[\bigg(\norm{x^{(i)}}_2+\norm{\sigma^{(i)}}_2\bigg)^2+c\bigg]^d \!-2 \bigg[\norm{x^{(i)}}_2\bigg(\norm{x^{(i)}}_2+\norm{\sigma^{(i)}}_2\bigg) \!+c\bigg]^d +\bigg(\!\norm{x^{(i)}}_2^2\!+c\bigg)^d.
\end{split}
\end{equation*}
\end{linenomath}

Applying the binomial expansion to three $d$-th powers implies that:
\begin{linenomath}
\begin{equation*}
\begin{split}
\norm{\zeta^{(i)}}^2_{\mathcal{H}} & \!\leq\! \sum_{k=0}^d \binom{d}{k} c^k \bigg(\norm{x^{(i)}}_2+\norm{\sigma^{(i)}}_2\bigg)^{2(d-k)}\!\!\!\!\!\! -2\sum_{k=0}^d \binom{d}{k} c^k \norm{x^{(i)}}_2^{d-k}\bigg(\norm{x^{(i)}}_2+\norm{\sigma^{(i)}}_2\bigg)^{d-k}+\\
& \quad + \sum_{k=0}^d \binom{d}{k}c^k\norm{x^{(i)}}_2^{2(d-k)}.
\end{split}
\end{equation*}
\end{linenomath}

We now split all the three sums by considering separately the cases when $k=0$, $k=d$ and, then, all the intermediate cases. Firstly, let us call $a_0$ the addendum of the sum corresponding to $k=0$. Therefore:
\begin{linenomath}
\begin{equation*}
\begin{split}
a_0 &= \bigg(\norm{x^{(i)}}_2+\norm{\sigma^{(i)}}_2\bigg)^{2d}-2 \norm{x^{(i)}}_2^d \bigg(\norm{x^{(i)}}_2+\norm{\sigma^{(i)}}_2 \bigg)^d+\norm{x^{(i)}}_2^{2d}\\
	  &= \bigg[\bigg(\norm{x^{(i)}}_2+\norm{\sigma^{(i)}}_2\bigg)^{d}-\norm{x^{(i)}}_2^{d}\bigg]^2\\
	  &= \bigg[\sum_{k=0}^d \binom{d}{k}\norm{x^{(i)}}^{d-k}_2\norm{\sigma^{(i)}}_2^k-\norm{x^{(i)}}_2^{d}\bigg]^2\\
	  &= \bigg[\sum_{k=1}^d \binom{d}{k}\norm{x^{(i)}}^{d-k}_2\norm{\sigma^{(i)}}_2^k+\norm{x^{(i)}}_2^{d}-\norm{x^{(i)}}_2^{d}\bigg]^2 = \bigg[\sum_{k=1}^d \binom{d}{k}\norm{x^{(i)}}^{d-k}_2\norm{\sigma^{(i)}}_2^k\bigg]^2.
\end{split}
\end{equation*}
\end{linenomath}

We notice that $a_0$ is the only addendum of the sum that does not contain $c$. This implies that $a_0$ is related to the bound $\delta^{(i)}_{d,0}$ for the homogeneous polynomial kernel.\\
Secondly, if $k=d$, we have no contribution because $c^d-2c^d+c^d=0.$ Before considering the cases $k=1,\ldots,d-1$, we now investigate what happens when the degree $d$ is equal to 1. Here, the index $k$ of the sums goes from 0 to 1, and therefore, as seen before:
\begin{linenomath}
\begin{equation*}
\norm{\zeta^{(i)}}^2_{\mathcal{H}} \leq \big(\delta^{(i)}_{\text{hom}}\big)^2=\big(C\eta^{(i)}\big)^2.
\end{equation*}
\end{linenomath}

Hence, when $d=1$, then $\delta^{(i)}_{1,c}=C\eta^{(i)}$. Conversely, when $d > 1$, we have all the addenda between $k=1$ and $k=d-1$. Thus, by combining all the three sums together we have:
\begin{linenomath}
\begin{equation*}
\begin{split}
\norm{\zeta^{(i)}}^2_{\mathcal{H}} & \!\!\leq\! a_0+\!\sum_{k=1}^{d-1} \!\binom{d}{k}c^k \bigg[\!\bigg(\!\norm{x^{(i)}}_2\!+\norm{\sigma^{(i)}}_2\!\bigg)^{2(d-k)}\!\!\!\!\!\!\!-2\norm{x^{(i)}}_2^{d-k}\!\!\bigg(\!\norm{x^{(i)}}_2\!+\norm{\sigma^{(i)}}_2\!\!\bigg)^{d-k}\!\!\!\!\!\!+\norm{x^{(i)}}_2^{2(d-k)}\!\!\bigg]\\
& = a_0 +\sum_{k=1}^{d-1} \binom{d}{k} c^k \bigg[\bigg(\norm{x^{(i)}}_2+\norm{\sigma^{(i)}}_2\bigg)^{d-k}\!\!-\norm{x^{(i)}}_2^{d-k}\bigg]^2.
\end{split}
\end{equation*}
\end{linenomath}

Again, by applying the binomial expansion to the $(d-k)$-th power of $\big(\norm{x^{(i)}}_2+\norm{\sigma^{(i)}}_2\big)$ and by splitting the sum, we are able to simplify the last term. Hence:
\begin{linenomath}
\begin{equation*}
\begin{split}
\norm{\zeta^{(i)}}^2_{\mathcal{H}} & \leq a_0+\sum_{k=1}^{d-1} \binom{d}{k} c^k \bigg[\sum_{j=0}^{d-k}\binom{d-k}{j} \norm{x^{(i)}}_2^{d-k-j}\norm{\sigma^{(i)}}_2^j-\norm{x^{(i)}}_2^{d-k}\bigg]^2\\
& = a_0+\sum_{k=1}^{d-1} \binom{d}{k} c^k \bigg[\sum_{j=1}^{d-k}\binom{d-k}{j} \norm{x^{(i)}}_2^{d-k-j}\norm{\sigma^{(i)}}_2^j\bigg]^2.
\end{split}
\end{equation*}
\end{linenomath}

Therefore, by taking the square root:
\begin{linenomath}
\begin{equation*}
\norm{\zeta^{(i)}}_{\mathcal{H}} \leq \sqrt{a_0+\sum_{k=1}^{d-1} \binom{d}{k} c^k \bigg[\sum_{j=1}^{d-k}\binom{d-k}{j} \norm{x^{(i)}}_2^{d-k-j}\norm{\sigma^{(i)}}_2^j\bigg]^2}.
\end{equation*}
\end{linenomath}

According to inequalities \eqref{norm_inequality_p2}$-$\eqref{norm_inequality_2p} and to hypothesis $\norm{\sigma^{(i)}}_p \leq \eta^{(i)}$, we obtain that:
\begin{linenomath}
\begin{displaymath}
\norm{\sigma^{(i)}}_2 \leq \left\{
\begin{array}{ll}
\norm{\sigma^{(i)}}_p \leq \eta^{(i)}, & 1\leq p \leq 2\\
\\
n^{\textstyle \frac{p-2}{2p}} \norm{\sigma^{(i)}}_p \leq n^{\textstyle \frac{p-2}{2p}} \eta^{(i)}, & p > 2.
\end{array} \right.
\end{displaymath}
\end{linenomath}

Finally, whenever $1\leq p \leq 2$, we have that:
\begin{linenomath}
\begin{equation*}
a_0 \leq \bigg[\sum_{k=1}^d \binom{d}{k}\norm{x^{(i)}}^{d-k}_2\norm{\sigma^{(i)}}_p^k\bigg]^2 
\leq \bigg[\sum_{k=1}^d \binom{d}{k}\norm{x^{(i)}}^{d-k}_2\big(\eta^{(i)}\big)^k\bigg]^2 = \big(\delta^{(i)}_{d,0}\big)^2,
\end{equation*}
\end{linenomath}

and the second addendum in the square root can be bounded by:
\begin{linenomath}
\begin{equation*}
\sum_{k=1}^{d-1} \binom{d}{k} c^k \bigg[\sum_{j=1}^{d-k}\binom{d-k}{j} \norm{x^{(i)}}_2^{d-k-j}\big(\eta^{(i)}\big)^j\bigg]^2.
\end{equation*}
\end{linenomath}

On the other hand, if $p>2$, then:
\begin{linenomath}
\begin{equation*}
a_0 \leq \bigg[\sum_{k=1}^d \binom{d}{k}\norm{x^{(i)}}^{d-k}_2n^{\textstyle \frac{k(p-2)}{2p}}\norm{\sigma^{(i)}}_p^k\bigg]^2  \leq \bigg[\sum_{k=1}^d \binom{d}{k}\norm{x^{(i)}}^{d-k}_2\bigg(n^{\textstyle \frac{p-2}{2p}}\eta^{(i)}\bigg)^k\bigg]^2 = \big(\delta^{(i)}_{d,0}\big)^2,
\end{equation*}
\end{linenomath}

and similarly the second addendum in the square root is always less than or equal to:
\begin{linenomath}
\begin{equation*}
\sum_{k=1}^{d-1} \binom{d}{k} c^k \bigg[\sum_{j=1}^{d-k}\binom{d-k}{j} \norm{x^{(i)}}_2^{d-k-j}\bigg(n^{\textstyle \frac{p-2}{2p}}\eta^{(i)}\bigg)^j\bigg]^2.
\end{equation*}
\end{linenomath}
\end{proof}

\noindent\textbf{{Proof of Proposition \ref{lemma_RBF_gaussian}}}

\begin{proof}
For all $x$ in $\mathbb{R}^n$, we have that $k(x,x)=1$ and, thus, equation \eqref{zeta_norm} reduces to:
\begin{linenomath}
\begin{equation*}
\norm{\zeta^{(i)}}^2_{\mathcal{H}} = 1-2\exp\bigg(\displaystyle -\frac{\norm{x^{(i)}+\sigma^{(i)}-x^{(i)}}^2_2}{2\alpha^2}\bigg)+1 = 2-2\exp\bigg(\displaystyle -\frac{\norm{\sigma^{(i)}}^2_2}{2\alpha^2}\bigg).
\end{equation*}
\end{linenomath}

Therefore:
\begin{linenomath}
\begin{equation*}
\norm{\zeta^{(i)}}_{\mathcal{H}} = \sqrt{2-2\exp\bigg(\displaystyle -\frac{\norm{\sigma^{(i)}}^2_2}{2\alpha^2}\bigg)}.
\end{equation*}
\end{linenomath}

The thesis follows by applying inequalities \eqref{norm_inequality_p2}$-$\eqref{norm_inequality_2p} and by considering the monotonicity of function $g(x)=-\exp(-x^2)$ when $x>0$.

\end{proof}

\newpage
\noindent\textbf{{Proof of Corollary \ref{coroll_LP_SOCP}}}

\begin{proof}

\begin{itemize}
\item[a)] If $q=1$, model \eqref{rob_GSVM} can be rewritten as model \eqref{robust_q1} by introducing an auxiliary vector $s \in \mathbb{R}^m$ such that each component $s_{i}$ is equal to $\abs{u_i}$ and adding the constraints $s_{i} \geq 0$, $s_{i} \geq - u_{i}$ and $s_{i} \geq u_{i}$ for all $i=1,\ldots,m$.
\item[b)] If $q=2$, the quadratic term $\norm{u}_2^2$ can be transformed from the objective function to the set of constraints by introducing auxiliary variables $r,t,v\in \mathbb{R}$ such that $t \geq \norm{u}_2$, $r+v=1$ and $r\geq \sqrt{t^2+v^2}$ (\cite{QiTianShi2013}). With the same reasoning at point a), model \eqref{rob_GSVM} reduces to model \eqref{robust_q2}.
\item[c)] If $q=\infty$, by introducing an auxiliary variable $s_\infty \geq 0$ equal to $\norm{u}_{\infty}$, and adding the constraints $s_\infty \geq -u_i$ and $s_\infty \geq u_i$ for all $i=1,\ldots,m$, model \eqref{rob_GSVM} is equivalent to model \eqref{robust_qinfty} with the same reasoning at point a).
\end{itemize}
\end{proof}

\newpage

\section{Supplementary results} \label{appendix_results}

\begin{table}[h!]
\centering
\resizebox{\textwidth}{!}{

}
\caption{Minimum and maximum values for the mean and the Coefficient of Variation (CV) computed feature-wise. The data transformation refers to the best choice when classifying the holdout 75\%-25\% with the deterministic model.} \label{tab_data_transformation}
\end{table}

\end{document}